\DeclareFontFamily{OT1}{rsfs}{}
\DeclareFontShape{OT1}{rsfs}{n}{it}{<-> rsfs10}{}
\DeclareMathAlphabet{\curly}{OT1}{rsfs}{n}{it}
\renewcommand\;{\hspace{.6pt}}
\newcommand\PP{\mathbb P}
\newcommand\LL{\mathbb L}
\newcommand\C{\mathbb C}
\newcommand\Q{\mathbb Q}
\newcommand\Z{\mathbb Z}
\newcommand\cO{\mathcal O}
\newcommand\cE{\mathcal E}
\newcommand\cI{\mathcal I}
\newcommand\cM{\mathcal M}
\newcommand\cP{\mathcal P}
\newcommand\cS{\mathcal S}
\newcommand\udot{^{\bullet}}
\newcommand{\so}{\ \ext@arrow 0359\Rightarrowfill@{}{\hspace{3mm}}\ }
\newcommand{\rt}[1]{\xrightarrow{\ #1\ }}
\newcommand\To{\longrightarrow}
\newcommand\into{\hookrightarrow}
\newcommand\INTO{\ \ar@{^(->}[r]<-.2ex>}
\newcommand{\Into}{\ensuremath{\lhook\joinrel\relbar\joinrel\rightarrow}}
\renewcommand\_{^{}_}
\newcommand\take{\backslash}
\newfont{\bigtimesfont}{cmsy10 scaled \magstep5}
\newcommand{\bigtimes}{\mathop{\lower0.9ex\hbox{\bigtimesfont\symbol2}}}
\renewcommand\={\ =\ }
\renewcommand\t{\mathfrak t}
\newcommand\vw{\mathsf{vw}}
\newcommand\VW{\mathsf{VW}}
\newcommand\vir{\operatorname{vir}}
\newcommand\red{\operatorname{red}}
\newcommand\tr{\operatorname{tr}}
\newcommand\coker{\operatorname{coker}}
\newcommand\id{\operatorname{id}}
\newcommand\Hom{\operatorname{Hom}}
\renewcommand\hom{\curly H\!om}
\newcommand\Ext{\operatorname{Ext}}
\newcommand\Hilb{\operatorname{Hilb}}
\newcommand\beq[1]{\begin{equation}\label{#1}}
\newcommand\eeq{\end{equation}}
\newcommand\beqa{\begin{eqnarray*}}
\newcommand\eeqa{\end{eqnarray*}}
\newcommand\arXiv[1]{\href{http://arxiv.org/abs/#1}{arXiv:#1}}
\newcommand\mathAG[1]{\href{http://arxiv.org/abs/math/#1}{math.AG/#1}}
\newcommand\hepth[1]{\href{http://arxiv.org/abs/hep-th/#1}{hep-th/#1}}
\DeclareRobustCommand{\SkipTocEntry}[3]{}
\newcommand\@dotsep{4.5}
\def\@tocline#1#2#3#4#5#6#7{\relax
  \ifnum #1>\c@tocdepth 
  \else
    \par \addpenalty\@secpenalty\addvspace{#2}%
    \begingroup \hyphenpenalty\@M
    \@ifempty{#4}{%
      \@tempdima\csname r@tocindent\number#1\endcsname\relax
    }{%
      \@tempdima#4\relax
    }%
    \parindent\z@ \leftskip#3\relax \advance\leftskip\@tempdima\relax
    \rightskip\@pnumwidth plus1em \parfillskip-\@pnumwidth
    #5\leavevmode #6\relax
    \leaders\hbox{$\m@th
      \mkern \@dotsep mu\hbox{.}\mkern \@dotsep mu$}\hfill
    \hbox to\@pnumwidth{\@tocpagenum{#7}}\par
    \nobreak
    \endgroup
  \fi}
\makeatletter \@addtoreset{equation}{section} \makeatother
\newtheorem{thm}[equation]{Theorem}
\newtheorem{conj}[equation]{Conjecture}
\newtheorem{lem}[equation]{Lemma}
\newtheorem{cor}[equation]{Corollary}
\newtheorem{prop}[equation]{Proposition}
\newenvironment{rmk}{\noindent\textbf{Remark}.}{\\}
\title{Sheaf counting on local K3 surfaces}
\author{Davesh Maulik and Richard P. Thomas}
\begin{document}
\maketitle
\begin{abstract} \noindent
There are two natural ways to count stable pairs or Joyce-Song pairs on $X=\mathrm{K3}\times\C$; one via weighted Euler characteristic and the other by virtual localisation of the reduced virtual class. Since $X$ is noncompact these need not be the same. We show their generating series are related by an exponential.

As applications we prove two conjectures of Toda, and a conjecture of Tanaka-Thomas defining Vafa-Witten invariants in the semistable case.
\end{abstract}


\tableofcontents

\section{Introduction}
Let $S$ be a smooth complex projective K3 surface, and $X=S\times\C$. Let $\mathsf Z^{\red}_P$ be the generating series of reduced residue stable pair invariants of $X$, and let $\mathsf z^\chi_P$ be the generating series of weighted Euler characteristics of stable pairs moduli spaces. Precise definitions are given in the next section.

\begin{thm} \label{it}
$$
\mathsf Z^{\red}_P(X,q,v)\=-\log\big(1+\mathsf z^\chi_P(X,q,v)\big).
$$
\end{thm}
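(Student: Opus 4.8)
The plan is to route both series through the $\C^*$-action on $X=S\times\C$ that scales the second factor, together with the induced action on the moduli space $P=P_{n,\beta}(X)$ of stable pairs. Its fixed locus $P^{\C^*}$ parametrises pairs set-theoretically supported on thickenings of $S\times\{0\}$, and—unlike $P$ itself—is proper because $S$ is. Both invariants descend to it. For the weighted Euler characteristic I would invoke the localisation principle for $\C^*$-invariant constructible functions, $\chi(M,f)=\chi(M^{\C^*},f|_{M^{\C^*}})$, which expresses $\mathsf z^\chi_P$ as the Behrend-weighted Euler characteristic $\chi\big(P^{\C^*},\nu_P|_{P^{\C^*}}\big)$, where $\nu_P$ is the Behrend function of the full moduli space. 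For the reduced residue series I would apply Graber-Pandharipande virtual localisation to the reduced virtual class, writing each contribution as $\int_{P^{\C^*}}[\cdots]/e(N^{\vir})$ with the residue prescription in the equivariant parameter $t$. After this step the whole comparison lives on the single proper space $P^{\C^*}$.

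The second step compares the two integrands there. Away from the one direction forced to be reduced by the holomorphic symplectic form on $S$, the stable-pair obstruction theory on the Calabi-Yau $3$-fold $X$ is symmetric, so restriction to the fixed locus splits $N^{\vir}$ into its moving weight spaces and, by the standard behaviour of Behrend functions along $\C^*$-fixed loci, $\nu_P|_{P^{\C^*}}$ differs from the intrinsic Behrend function of $P^{\C^*}$ by the sign $(-1)^{\dim N^{\vir}_{\mathrm{mov}}}$. Inserting this into the residue and using Behrend's theorem in reduced equivariant form shows that, component by component, the reduced residue contribution of a fixed component equals its Behrend-weighted Euler characteristic up to this accumulated sign.

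The exponential is then a feature of how the fixed components assemble as $(n,\beta)$ varies. A $\C^*$-fixed stable pair is recorded by its weight decomposition in the normal direction to $S\times\{0\}$, and the induced stratification expresses the full series $1+\mathsf z^\chi_P$—the $1$ being the empty pair—as a plethystic exponential of its connected, primitively-supported contributions; these are exactly what the reduced localisation isolates, since the reduced class suppresses the symplectic direction responsible for the decomposable strata. Passing to the logarithm thus converts the multiplicative assembly of Euler characteristics into the additive reduced residue series, while the sign $(-1)^{\dim N^{\vir}_{\mathrm{mov}}}$ supplies the overall minus, giving $\mathsf Z_P=-\log\big(1+\mathsf z^\chi_P\big)$.

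The crux is the second step together with matching it to the third. It needs an equivariant refinement of Behrend's theorem compatible with the \emph{reduced} rather than the full obstruction theory, careful sign and orientation bookkeeping for the moving part of $N^{\vir}$, and a check that the a priori transcendental dependence on $t$ in the residue cancels to leave integer coefficients. It is precisely here that the K3 geometry—the reason the reduction is needed at all—does the essential work, and making the plethystic and sign structures coincide on the nose is where I expect the real difficulty to lie.
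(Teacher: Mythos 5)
Your proposal has a genuine gap, and it sits exactly where you locate the ``crux.'' The component-by-component comparison in your second step cannot be correct: if each fixed component's reduced residue contribution equalled its Behrend-weighted Euler characteristic up to a sign, summing would give a term-by-term signed identity between $\mathsf Z_P$ and $\mathsf z^\chi_P$, whereas the theorem's logarithm genuinely mixes charges --- the coefficient of $q^nv^\beta$ in $-\log\big(1+\mathsf z^\chi_P\big)$ contains products $P_{\alpha_1}\cdots P_{\alpha_k}$ over decompositions $(\beta,n)=\sum_i(\beta_i,n_i)$ with all $\beta_i\ne0$, and these do not vanish. Your third step is meant to repair this via a ``plethystic exponential'' structure on the fixed locus, but no such structure exists on $X$: by $\C^*$-localisation of constructible functions, $\mathsf z^\chi_P(X)$ is already the weighted Euler characteristic of the locus of pairs supported set-theoretically on the single fibre $S\times\{0\}$ (the multi-fibre strata carry a free $\C^*$-action and contribute zero), so there is no decomposition of $P^{\C^*}$ into ``connected versus decomposable'' pieces that could produce a logarithm. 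Moreover, the key tool you invoke --- an equivariant Behrend-type theorem compatible with the \emph{reduced} obstruction theory --- does not exist; Behrend's theorem requires a proper moduli space with a \emph{symmetric} obstruction theory, while the reduced theory has virtual dimension $1$ and its residue $\int 1/e(N^{\vir})$ carries genuine $t$-dependence not governed by any Euler characteristic of the fixed locus. Producing a bridge of this kind is precisely the hard content of the theorem, not an available input.

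The paper's route supplies the missing compact symmetric geometry by passing to $Y=S\times E$ with $E$ an elliptic curve. There Oberdieck's quotient $P_n(Y,\iota_*\beta)/E$ is proper and carries a symmetric obstruction theory, so virtual and Behrend-weighted invariants agree on the nose; this is the only place a Behrend-type theorem is used. The two sides of the identity are then computed separately: the Euler-characteristic side is compared with $X$ by an analytic gluing of pairs fibre-by-fibre and a stratification of $P_n(Y,\iota_*\beta)$ by the number $k$ of fibres of support, where the configuration spaces $(E^k\take\Delta_k)/E$, of Euler characteristic $(-1)^{k-1}(k-1)!$, are what generate the logarithm; the virtual side is converted by Oberdieck's insertion formula into a reduced-class integral on $P_n(Y,\iota_*\beta)$, then evaluated by the Li--Wu degeneration formula (degenerating $E$ to a nodal rational curve) and virtual localisation, with cosection arguments killing all cross terms, yielding exactly $\mathsf Z_P(X)$. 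In short, the exponential comes from the geometry of points moving on the elliptic curve, not from any structure intrinsic to the $\C^*$-fixed locus of $X$, and the comparison of virtual with weighted invariants happens on a compact space where it is a theorem rather than on $X$ where it is false term by term.
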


The idea of the proof is to start with $Y=S\times E$, where $E$ is an elliptic curve. To get nonzero invariants we divide the moduli space by the translation action of $E$ and use Oberdieck's symmetric reduced obstruction theory \cite{Ob}. Since the moduli space is compact, the invariants defined by virtual cycle or weighted Euler characteristic coincide:
$$
\mathsf Z^{\red}_P(Y/E,q,v)\=\mathsf z^\chi_P(Y/E,q,v).
$$
On the left hand side we then use Li-Wu's degeneration formula for virtual cycles \cite{LW} as $E$ degenerates to a rational nodal curve. This has a $\C^*$ action; applying virtual localisation ultimately gives the left hand side of Theorem \ref{it}. On the right hand side we work with weighted Euler characteristics, using a simple gluing argument to compare the moduli spaces of stable pairs supported set-theoretically on one K3 fibre of either $X$ or $Y$. An elementary calculation of Euler characteristics of configuration spaces of points on a punctured elliptic curve then gives the right hand side of Theorem \ref{it}. \medskip

Since $\mathsf Z^{\red}_P(X)$ is now proved to be given by the KKV formula \cite{PT3}, this implies a weighted Euler characteristic version of the KKV conjecture for $X$. We also prove an unweighted, bare Euler characteristic version, thus proving a conjecture of Toda \cite{To1}. From this we also deduce a multiple cover formula conjectured in \cite{To2} for invariants counting 1-dimensional semistable sheaves on $X$.
\medskip

Replacing stable pairs by Joyce-Song pairs of arbitrary semistable sheaves on $X$, a similar argument relates their virtual and weighted Euler characteristic invariants. Combined with Joyce-Song's universal identity for the weighted Euler characteristic invariants, this proves an identity (roughly the logarithm of Joyce-Song's identity) conjectured in \cite{TT2}. This is important for the  conjectural definition \cite{TT2} of Vafa-Witten invariants of surfaces $S$ in the presence of strict semistables.

\begin{thm} For $S$ a K3 surface \cite[Conjecture 7.2]{TT2} holds, so the Vafa-Witten invariants $\VW_\alpha(S)$ are well defined. Moreover, they equal the invariants $\vw_\alpha(S)$ defined by weighted Euler characteristic in \mbox{\cite[Section 4]{TT2}.}
\end{thm}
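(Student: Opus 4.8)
The plan is to rerun the argument behind Theorem \ref{it} with Joyce--Song pairs in place of stable pairs. First I would set up the two parallel enumerative theories of Joyce--Song pairs of semistable sheaves on $X=S\times\C$: the reduced residue virtual invariants $\mathsf Z_{JS}$, obtained by virtual localisation of the reduced virtual class, and the weighted Euler characteristic invariants $\mathsf z^\chi_{JS}$. Passing to the compact model $Y=S\times E$ and quotienting by the translation action of $E$, Oberdieck's symmetric reduced obstruction theory \cite{Ob} shows the two kinds of invariants agree on $Y/E$. Degenerating $E$ to a nodal rational curve and applying the Li--Wu degeneration formula \cite{LW} with virtual localisation on one side, and the gluing argument together with the configuration-space Euler characteristic count on the other, I expect to reach the Joyce--Song analogue of Theorem \ref{it},
$$
\mathsf Z_{JS}(X,q,v)\=-\log\big(1+\mathsf z^\chi_{JS}(X,q,v)\big).
$$

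The second ingredient is Joyce and Song's universal wall-crossing identity, which holds at the level of weighted Euler characteristics and expresses $1+\mathsf z^\chi_{JS}$ as an explicit exponential of the sheaf-counting invariants $\vw_\alpha(S)$ of \cite[Section 4]{TT2}. Taking the logarithm of this identity and comparing with the display above, the two right-hand sides match, so the virtual pair series $\mathsf Z_{JS}$ is forced into exactly the form Joyce--Song's identity predicts, now with virtual sheaf-counting invariants playing the role of the $\vw_\alpha(S)$. This logarithmic identity is precisely the shape conjectured in \cite[Conjecture 7.2]{TT2}; its validity is what makes the extraction of the $\VW_\alpha(S)$ from the virtual pair invariants consistent and independent of the auxiliary stability data, so the $\VW_\alpha(S)$ are well defined. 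Matching coefficients then identifies $\VW_\alpha(S)=\vw_\alpha(S)$.

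The main obstacle is the first step: proving the Joyce--Song analogue of Theorem \ref{it}. Because Joyce--Song pairs are built from arbitrary semistable sheaves rather than from the stable pairs of \cite{PT3}, one must check that the compact geometry of $Y/E$, Oberdieck's obstruction theory, the degeneration formula, and the weighted gluing and configuration-space computation all survive in this broader setting and yield the same exponential. A second, essential delicacy is that two a priori different exponentials are at work -- the virtual-versus-Euler-characteristic comparison of Theorem \ref{it} and the Joyce--Song wall-crossing expansion -- and one must verify that their logarithms are taken with respect to the same grouping of contributions, so that the coefficients genuinely line up and the resulting $\VW_\alpha(S)$ are well defined.
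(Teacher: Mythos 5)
Your plan coincides with the paper's own proof: the paper likewise reruns the arguments of Sections \ref{Kai} and \ref{Georg} for Joyce--Song pairs to obtain Proposition \ref{VW} (the Joyce--Song analogue of Theorem \ref{it}, with the technical caveat that the patching only applies to charges proportional to a fixed primitive class), and then compares the expansion of that exponential with the Hall-algebra identity \eqref{munch} quoted from \cite{TT2} to conclude that the one-term formula \eqref{shorter} holds with $\VW_\alpha(S)=\vw_\alpha(S)$. Your observation that the logarithm collapses the Joyce--Song sum into the single-term shape is exactly the mechanism the paper uses, since for K3 ($H^{0,2}(S)\neq0$) Conjecture \ref{conj} demands precisely that shorter form.
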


The analogous result was proved for surfaces with $\deg K_S<0$ in \cite{TT2}. The Conjecture was also proved there for \emph{all} surfaces when there are no strictly semistable sheaves in class $\alpha$, but when $K_S>0$ the invariants $\VW_\alpha(S)$ and $\vw_\alpha(S)$ no longer coincide.

\medskip\noindent\textbf{Acknowledgements.} Many thanks to Yukinobu Toda, Rahul Pandharipande, Georg Oberdieck, J{\o}rgen Rennemo, Ed Segal and a very insightful referee for useful comments and conversations.  DM is supported by NSF grants DMS-1645082 and DMS-1564458. RPT acknowledges partial support from EPSRC grant EP/R013349/1.

\section{Notation}
Throughout we will use the following notation.
\begin{itemize}
\item $S$ is an algebraic K3 surface over $\C$.
\item $H^{1,1}(S,\Z):=H^2(S,\Z)\cap H^{1,1}(S,\C)$ inside $H^2(S,\C)$. We freely identify $H^2(S)\cong H_2(S)$ over $\Z,\,\Q$ or $\C$, by Poincar\'e duality.
\item $\beta\in H^{1,1}(S,\Z)$ is a curve class.
\item $X=K_S=S\times\C$ is a Calabi-Yau 3-fold with $\C^*$ action of weight 1 on the $\C$ factor.
\item $Y=S\times E$ for some smooth elliptic curve $E$ with origin $e\in E$.
\item $T$ is a 3-fold, K3-fibred over the marked disk $(\Delta,0)$ with central fibre $S$. Considered as a map from $\Delta$ to the moduli stack of K3 surfaces, it intersects the Noether-Lefschetz divisor of $\beta$ only in $0$, and \emph{transversely}. Moreover it is also transverse to Noether-Lefschetz divisors corresponding to all all classes $\beta'\in H^{1,1}(S,\Z)$ of smaller degree. We think of $T$ as an algebraic approximation to a twistor space for $S$. For full details see \cite[Section 6.2]{PT3}.
\item We use $\pi$ for the three K3 fibrations $X\to\C,\ Y\to E,\ T\to\Delta$.
\item We use $\iota\colon S\into X,Y,T$ for the three inclusions of the central fibre $S$ (over $0\in\C,\ e\in E,\ 0\in\Delta$ respectively) in the above fibrations.
\end{itemize}

We study stable pairs $(F,s)$ on $X=S\times\C$; that is,
\begin{enumerate}
\item $F$ is a coherent sheaf on $X$ of pure dimension one, and
\item $s\in H^0(F)$ has finite cokernel.
\end{enumerate}
For $n\in\Z$, there is a quasi-projective moduli space
$$
P_n(X,\iota_*\beta)\ =\ P_n(S\times\C,\iota_*\beta)
$$
of stable pairs with holomorphic Euler characteristic $\chi(F)=n$ and curve class $[F]=\iota_*\beta$. Since it admits a symmetric obstruction theory there are two ways of extracting invariants from this moduli space, and since it is noncompact they need not be equal.
\medskip

We define the generating series of (integer) Kai-weighted Euler characteristic invariants by
\beq{zp}
\mathsf z_P^\chi(X,q,v)\ :=\ \sum_{\beta,n}e\big(P_n(X,\iota_*\beta),\chi^B\big)\;q^nv^\beta.
\eeq
Here $\chi^B$ is Behrend's integer-valued constructible function of the scheme $P_n(X,\iota_*\beta)$ \cite{Be}. We will compare this to the generating series
\beq{rr2}
\mathsf Z_P^{\red}(X,q,v)\ :=\ \sum_{\alpha}P^{\mathrm{red}}_{n,\beta}(X)q^nv^\beta
\eeq
of (rational) \emph{reduced residue} stable pair invariants of $S$. These are defined by applying the virtual localisation formula \cite{GP} to the \emph{reduced} perfect obstruction theory of $X=S\times\C$ acted on by $\C^*$,
\beq{rr}
P^{\mathrm{red}}_{n,\beta}(X)\ :=\ 
\mathrm{Res}\_{\;t=0}\int_{[P_n(X,\,\iota_*\beta)^{\C^*}]^{\mathrm{red}}}\,\frac 1{e(N^{\mathrm{vir}})}\ \in\ \Q.
\eeq
Here $t\in H^2(B\C^*)$ is the equivariant parameter. \medskip

Note that by condition (1) above, only charges $(\beta,n)$ with $\beta\ne0$ contribute to the sums \eqref{zp} and \eqref{rr2}.
It will be convenient to abbreviate the charge (pushed down to $S$) to
$$
\alpha\,:=\,(\beta,n)\ \in\ H^{1,1}(S,\Z)\oplus\Z
$$
and write
\beq{kaiinvt}
P_\alpha\,:=\,e\big(P_n(X,\iota_*\beta),\chi^B\big)
\quad\mathrm{and}\quad Q^\alpha\,:=\,q^nv^\beta
\eeq
so that \eqref{zp}, for instance, becomes
\beq{zp4}
\mathsf z_P^\chi(X,Q)\=\sum_\alpha P_\alpha\,Q^\alpha.
\eeq
To prove Theorem \ref{it} relating the two generating series $\mathsf z^\chi_P$ and $\mathsf Z_P^{\red}$ we first relate both to the invariants of $Y=S\times E$.

\section{Behrend}\label{Kai}
Let $P_n(Y,\iota_*\beta)$ denote the moduli space of stable pairs on $Y$ in the curve class $\iota_*\beta$. Since it carries an action of $E$ with finite stabilisers, we can consider the weighted Euler characteristic of the quotient $P_n(Y,\iota_*\beta)/E$, where the weighting is by $\chi^B$ \emph{divided by the order of the stabiliser group} at any point. We form the generating series
\beq{Ygen}
\mathsf z_P^\chi\big(Y/E,Q\big)\ :=\ \sum_{\alpha}e\big(P_n(Y,\iota_*\beta)/E,\chi^B\big)Q^\alpha.
\eeq
The relationship between the weighted Euler characteristic invariants of $X$ \eqref{zp} and $Y$ \eqref{Ygen} is the following.

\begin{prop} \label{jpo}
$$
\mathsf z_P^\chi\big(Y/E,q,v\big)\=-\log\big(1+\mathsf z_P^\chi(X,q,v)\big).
$$
\end{prop}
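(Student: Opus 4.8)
The plan is to stratify $P_n(Y,\iota_*\beta)$ by the finite set of points of $E$ over which a stable pair is set-theoretically supported, reducing everything to a single-fibre calculation by a gluing argument, as in the sketch above. Since $[F]=\iota_*\beta$ is a fibre class, the support of any pair on $Y$ meets only finitely many fibres $S\times\{p_1,\dots,p_k\}$, and as the supports are disjoint the pair splits as a direct sum of pairs supported at the individual $p_i$. First I would isolate the building blocks: let $\cN_\alpha\subset P_n(Y,\iota_*\beta)$ be the closed locus of pairs supported set-theoretically on the single fibre $S\times\{0\}$. A neighbourhood of this fibre in $Y$ is analytically $S\times\Delta$, identical to a neighbourhood of the central fibre in $X=S\times\C$; since $\chi^B$ is \'etale-local this identifies $\cN_\alpha$, with the restriction of $\chi^B$, with the corresponding locus in $P_n(X,\iota_*\beta)$. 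Applying $\C^*$-localisation for weighted Euler characteristics on the noncompact space $P_n(X,\iota_*\beta)$ (the attracting flow $\lambda\to0$ contracts every pair onto the central fibre) then identifies $\sum_\alpha e(\cN_\alpha,\chi^B)\,Q^\alpha$ with $\mathsf z_P^\chi(X,Q)$.

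The key bookkeeping point is the one-dimensional normal direction. On $X$ the space $P_n(X)$ retains the position parameter in $\C$, so $\chi^B_{P_n(X)}$ restricted to the fibre-supported locus differs from the intrinsic Behrend function $\chi^B_{\cN_\alpha}$ by the sign $(-1)$ of this one smooth direction; thus the internal series $\mathring B:=\sum_\alpha e(\cN_\alpha,\chi^B_{\cN_\alpha})Q^\alpha$ (which has no constant term, as only $\beta\ne0$ contributes) equals $-\mathsf z_P^\chi(X,Q)$. On the quotient side this direction is instead removed by the $E$-action: near a $k$-point configuration, multiplicativity of $\chi^B$ over the disjoint-support splitting makes $P_n(Y)/E$ locally a product of the $k$ building-block loci $\cN_{\alpha_i}$ with the quotient position space $\operatorname{Conf}_k(E)/E$, which is smooth of dimension $k-1$ and so contributes the Behrend sign $(-1)^{k-1}$.

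Next I would compute the Euler characteristics of the position spaces. Fixing the first point of an ordered configuration trivialises the translation action, giving $\operatorname{Conf}_k(E)/E\cong\operatorname{Conf}_{k-1}(E\setminus\{0\})$ (here $\operatorname{Conf}$ denotes ordered configurations), so that $e(\operatorname{Conf}_k(E)/E)=\prod_{j=1}^{k-1}(-j)=(-1)^{k-1}(k-1)!$ using $e(E\setminus\{0\})=-1$; this is the elementary calculation on a punctured elliptic curve. Assembling the strata by the standard exponential (power-structure) formula for weighted Euler characteristics — summing over unordered configurations of building blocks, dividing the ordered count by $k!$, and attaching the Behrend sign $(-1)^{k-1}$ of the position factor — yields
\[
\mathsf z_P^\chi(Y/E,Q)\=\sum_{k\ge1}(-1)^{k-1}\,\frac{e\big(\operatorname{Conf}_k(E)/E\big)}{k!}\,\mathring B^{\,k}\=\sum_{k\ge1}\frac{1}{k}\,\mathring B^{\,k}\=-\log\big(1-\mathring B\big),
\]
and substituting $\mathring B=-\mathsf z_P^\chi(X,Q)$ gives the claimed identity.

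I expect the main obstacle to be the sign and stabiliser bookkeeping rather than any single hard estimate: justifying that the Behrend-weighted Euler characteristic is genuinely multiplicative over the disjoint-support splitting (so that the power structure applies verbatim to the weighted count), that the \'etale-local gluing really matches $\chi^B$ on $X$ and on $Y$ including the normal direction, and that the stacky quotient by $E$ with its finite stabilisers contributes exactly $e(\operatorname{Conf}_k(E)/E)/k!$ with no further correction. Tracking the single relative sign coming from the surviving $\C$-position on $X$ versus the quotiented $E$-direction on $Y/E$ is precisely what turns the naive $+\log$ into the $-\log$ of the statement.
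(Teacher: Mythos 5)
Your strategy is the paper's: stratify $P_n(Y,\iota_*\beta)$ by the configuration of fibres supporting the pair, transplant single-fibre pairs between $Y$ and $X$ via the exponential map, compute $e\big(\mathrm{Conf}_k(E)/E\big)=(-1)^{k-1}(k-1)!$, and assemble the strata into a logarithm; your configuration-space calculation and final combinatorics are correct. However, your ``key bookkeeping point'' contains a genuine error. Your claim (A), that $\chi^B_{P_n(X)}$ restricted to the fibre-supported locus $\cN_\alpha$ is $(-1)$ times an \emph{intrinsic} Behrend function $\chi^B_{\cN_\alpha}$, and your claim (B), that $P_n(Y)/E$ is locally the product of the $\cN_{\alpha_i}$ with the smooth factor $\mathrm{Conf}_k(E)/E$, both amount to asserting that $P_n(X)$ is analytically $\cN_\alpha\times\C$ near $\cN_\alpha$. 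This fails whenever the charge $\alpha$ can be distributed over more than one fibre --- which is exactly the situation the proposition is about. For instance, if $\beta=\beta_1+\beta_2$ with the $\beta_i$ represented by disjoint curves, then near a pair supported on $C_1\sqcup C_2\subset S\times\{0\}$ the moduli space is a product $U_1\times U_2$ of neighbourhoods $U_i\subset P_{n_i}(X,\iota_*\beta_i)$, each containing its \emph{own} position direction; so the ambient Behrend function restricted to $\cN_\alpha$ differs from the intrinsic one by $(-1)^2=+1$, not $-1$. With $k'$ independently movable support components the sign is $(-1)^{k'}$, and at points where components can collide or split there is no product structure, hence no clean relation at all. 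Note also that $\cN_\alpha$ is merely a closed subset, so ``its'' Behrend function requires a choice of scheme structure you never specify; Behrend functions are sensitive to this and do not restrict along closed embeddings.

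The reason you nevertheless land on the correct formula is that (A) and (B) commit the same unjustified identification in opposite directions, so the errors cancel: substituting (A) into your stratum weight $(-1)^{k-1}\prod_i\chi^B_{\cN_{\alpha_i}}$ gives $-\prod_i\chi^B_{P_{n_i}(X)}\big|_{P^0_{\alpha_i}}$, which \emph{is} the correct weight. The valid way to run the argument --- and it is what the paper does --- is to banish intrinsic Behrend functions of strata entirely. Since $\chi^B$ is determined by the local analytic structure and the support components are disjoint, a neighbourhood of a stratum point in $P_n(Y)$ is analytically a product of neighbourhoods, inside the \emph{full} moduli spaces $P_{n_i}(X,\iota_*\beta_i)$, of points of the loci $P^0_{\alpha_i}$; all position and splitting deformations are internal to these factors. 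Hence
$$
\chi^B_{P_n(Y)}\big|_{\mathrm{stratum}}\;=\;\prod_i\,\chi^B_{P_{n_i}(X)}\big|_{P^0_{\alpha_i}},
$$
whose weighted Euler characteristics are the invariants $P_{\alpha_i}$ by your own $\C^*$-localisation step, and the only residual sign is the single $-1$ from dividing by $E$ (locally $\chi^B_{M/E}=-\chi^B_M$). Each stratum then contributes $-\prod_i\frac{P_{\alpha_i}^{k_i}}{k_i!}\,e\big((E^k\setminus\Delta_k)/E\big)$, and the summation you already wrote down yields $-\log\big(1+\mathsf z_P^\chi(X,q,v)\big)$.
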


\begin{proof}
Firstly we use the $\C^*$ action on $X=S\times\C$, which induces one on the moduli spaces $P_n(X,\iota_*\beta)$ preserving the Behrend function $\chi^B$. Its fixed points lie in the locus 
$$
P^0_\alpha\ \subset\ P_n(X,\iota_*\beta)
$$
of stable pairs with set-theoretic support on $S\times\{0\}\subset S\times\C$. The $\C^*$ action is free on the complement of this locus, so its weighted Euler characteristic is zero. Therefore \eqref{kaiinvt} localises to
\beq{loc}
P_\alpha\ =\ e\Big(P^0_\alpha,\,\chi^B|_{P^0_\alpha}\Big).
\eeq
(The invariant $P_\alpha$ can be further localised to the $\C^*$-fixed points of $P^0_\alpha$, but we do not use that here.) \medskip

To relate $X$ and $Y$ we fix, once and for all, a trivialisation of the tangent bundle of $E$. The exponential map then gives a canonical analytic isomorphism between a neighbourhood of any point $p\in E$ and a neighbourhood of $0\in\C$. Using this to transplant stable pairs from $S\times\{0\}\subset X$ to $S\times\{p\}\subset Y$ we see
$$
P^0_{\alpha}\times E
$$
as the moduli space of stable pairs on $Y=S\times E$ (with holomorphic Euler characteristic $n$ and curve class $\iota_*\beta$) \emph{supported set theoretically on a single fibre $S\times\{p\}$}. \medskip

This suggests stratifying the moduli space $P_n(Y,\iota_*\beta)$ by the (minimal) number $k$ of fibres $S$ on which the pairs are set-theoretically supported. Each carries a charge; we call the \emph{distinct} charges $\alpha_1,\ldots,\alpha_\ell$. Let $k_i$ denote the number of fibres $S$ carrying charge $\alpha_i=(\beta_i,n_i)$, so
$$
\sum_{i=1}^\ell k_i\=k\quad\mathrm{and}\quad\sum_{i=1}^\ell k_i\alpha_i\=\alpha.
$$
We claim the stratum of the moduli space $P_n(Y,\iota_*\beta)$ with this data is the quotient of
\beq{mod}
\underbrace{P^0_{\alpha_1}\times\cdots\times P^0_{\alpha_1}}_{k_1}\times\cdots\times \underbrace{P^0_{\alpha_\ell}\times\cdots\times P^0_{\alpha_\ell}}_{k_\ell}\times(E^k\take\Delta_k)
\eeq
by the free action of the product $\Sigma_{k_1}\times\cdots\times\Sigma_{k_\ell}$ of symmetric groups. Here $\Delta_k$ is the \emph{big} diagonal.

To see the claim, observe the product \eqref{mod} is the moduli space of pairs with a fixed choice of ordering of the $k_i$ points of $E$ above which the charges $\alpha_i$ are supported. Since these ordered points are distinct they are equivalent to a point of $E^k\take\Delta_k$. The symmetric groups act by permuting these points, changing their ordering.

Since the Behrend function is determined analytically, it cannot tell the difference between $\C$ and $E$. Therefore its pull back from $P_n(Y,\iota_*\beta)$ to \eqref{mod} is just the product of the Behrend functions
$$
\chi^B_{P_{n_i}(X,\,\iota_*\beta_i)}\big|_{P^0_{\alpha_i}}
$$
on each $P^0_{\alpha_i}$ factor. Dividing by the symmetric groups preserves the Behrend function $\chi^B$, while dividing by $E$ just changes its sign. Thus the weighted Euler characteristic of the quotient of \eqref{mod} by the action of both the symmetric groups and $E$ is, by \eqref{loc},
$$
-\frac{P_{\alpha_1}^{k_1}}{k_1!}\frac{P_{\alpha_2}^{k_2}}{k_2!}\cdots\frac{P_{\alpha_\ell}^{k_\ell}}{k_\ell!}
\,e\!\left(\!\frac{E^k\take\Delta_k}E\!\right)\=(-1)^k\frac1k{k\choose k_1,k_2,\ldots,k_\ell}P_{\alpha_1}^{k_1}\cdots P_{\alpha_\ell}^{k_\ell}\,.
$$
Here we have used
$$
e\left(\!\frac{E^k\take\Delta_k}E\!\right)\=(-1)^{k-1}(k-1)!\,,
$$
proved inductively using the fibration $(E^k\take\Delta_k)/E\to(E^{k-1}\take\Delta_{k-1})/E$, whose fibre $E\take\{p_1,\ldots,p_{k-1}\}$ has Euler characteristic $-(k-1)$.

Summing over all strata and all $\alpha$ computes the generating series \eqref{Ygen} as
\begin{align*}
\sum_{k=1}^\infty\ \ \mathop{\sum_{\ell,k_i,\,\alpha_i\mathrm{\,distict}\colon}}_{\sum_{i=1}^\ell k_i=k}\ &\frac{(-1)^k}k{k\choose k_1,k_2,\ldots,k_\ell}(P_{\alpha_1}Q^{\alpha_1})^{k_1}\cdots (P_{\alpha_\ell}Q^{\alpha_\ell})^{k_\ell} \\
=\ \sum_{k=1}^\infty\frac1k&\left(-\sum_{\alpha}P_\alpha Q^\alpha\right)^{\!\!k}
\\ &=\ -\log\left(1+\sum_{\alpha}P_\alpha Q^\alpha\right)\!,
\end{align*}
which by \eqref{zp4} is what we wanted to prove.
\end{proof}

\begin{rmk}
It is possible to give a quicker, more highbrow proof of this result using the technology introduced by Oberdieck-Shen \cite{OS}. Using their $E$-equivariant power structure, the stratification of $P_n(Y,\iota_*\beta)$ used above \eqref{mod} is equivalent to the identity
$$
1+\sum_{n,\beta}\big[P_n(Y,\iota_*\beta)\big] q^n v^{\beta}\=\Big(
1+\sum_{\alpha}[P_{\alpha}^0]Q^\alpha\Big)^{\![E]}
$$
in the $E$-equivariant Grothendieck group of varieties $K_0^E(\mathrm{Var})[\![q,v]\!]$. Applying (a Behrend-weighted version of) their $E$-equivariant integration map $\cI$ to the ring of dual numbers $\Q[\epsilon]/(\epsilon^2)$, we get
\beqa
1+\epsilon\sum_{n,\beta}e\big(P_n(Y,\iota_*\beta)/E,\chi^B\big)q^nv^{\beta}
&=& \Big(1+\sum_\alpha P_\alpha Q^\alpha\Big)^{\cI(E)} \\
&=& \Big(1+\sum_\alpha P_\alpha Q^\alpha\Big)^{\!-\epsilon} \\
&=& 1-\epsilon\log\Big(1+\sum_\alpha P_\alpha Q^\alpha\Big).
\eeqa
Taking coefficients of $\epsilon$ then recovers Proposition \ref{jpo}.
\end{rmk}

\section{Oberdieck} \label{Georg}
Oberdieck \cite[Section 3]{Ob} proves that $P_n(Y,\iota_*\beta)/E$ carries a natural symmetric perfect obstruction theory, so that its Kai-weighted invariants \eqref{Ygen} of the last Section coincide with invariants defined by integrating 1 over his virtual cycle. Furthermore, in \cite[Theorem 1]{Ob} he proves that the latter invariants can be defined differently --- by taking the \emph{reduced} virtual cycle on $P_n(Y,\iota_*\beta)$ and integrating an insertion instead of dividing by $E$.

That is, letting $\beta^\vee\in H^2(S,\Q)$ denote any class with $\int_S\beta\cup\beta^\vee=1$, we have
\beq{BP}
e\big(P_n(Y,\iota_*\beta)/E,\chi^B\big)\=\int_{[P_n(Y,\iota_*\beta)]^{\mathrm{red}}}
\tau_0(\iota_*\beta^\vee).
\eeq
To the right hand side of \eqref{BP} we can now apply the degeneration formula \cite{Li, LW}, since that uses virtual cycles rather than weighted Euler characteristics. Degenerating $E$ to be a 1-nodal rational elliptic curve will express \eqref{BP} in terms of the reduced residue stable pair invariants of $S\times\C=X$ \eqref{rr}. Combined with the calculation of the left hand side of \ref{BP} in Proposition \ref{jpo} we will obtain the following.

\begin{thm}\label{DM} The reduced residue stable pair invariants of $X$ \eqref{rr2} are related to its weighted Euler characteristic stable pair invariants \eqref{zp} by 
$$
\mathsf Z_P^{\red}(X,q,v)\=-\log\big(1+\mathsf z^\chi_P(X,q,v)\big).
$$
\end{thm}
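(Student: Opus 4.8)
The plan is to read \eqref{BP} as an identity between the two sides of the theorem. Summing \eqref{BP} over all charges $\alpha=(\beta,n)$ and feeding in Proposition \ref{jpo}, its left hand side becomes
$$\mathsf z^\chi_P(Y/E,q,v)\=-\log\big(1+\mathsf z^\chi_P(X,q,v)\big),$$
so it is enough to show that the right hand side assembles into $\mathsf Z_P(X,q,v)$; that is, that
$$\int_{[P_n(Y,\iota_*\beta)]^{\mathrm{red}}}\tau_0(\iota_*\beta^\vee)\=P^{\mathrm{red}}_{n,\beta}(X)$$
for each $\alpha$. Because both the reduced obstruction theory and the insertion $\tau_0(\iota_*\beta^\vee)$ are carried by the K3 factor $S$, while only the elliptic factor will be moved, this reduced descendent invariant of the compact $Y$ is deformation invariant in $E$.

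First I would degenerate $E$ to a $1$-nodal rational curve $E_0$, whose normalisation is $\PP^1$ with the two branches of the node sitting over $0,\infty\in\PP^1$. Since the Li-Wu formula \cite{Li,LW} is written for virtual cycles, it applies to the reduced cycle here --- the reducedness being inherited from the spectator surface $S$, which does not degenerate --- and expresses $\int_{[P_n(Y)]^{\mathrm{red}}}\tau_0(\iota_*\beta^\vee)$ through the relative geometry of $(S\times\PP^1,\,S_0\sqcup S_\infty)$, the two relative copies of $S$ being glued to one another at the node. The fibrewise insertion $\tau_0(\iota_*\beta^\vee)$ transplants to $\tau_0$ of $\beta^\vee$ times the class of an interior point of $\PP^1$.

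Next I would use the $\C^*$-action scaling $\PP^1$ and apply the virtual localisation formula \cite{GP}. Its fixed pairs are supported set-theoretically over $0$ or $\infty$; near either point $S\times\PP^1$ is analytically $S\times\C=X$, so the relevant fixed loci are the $P_n(X,\iota_*\beta)^{\C^*}$, with the reduced obstruction theory and virtual normal bundle $N^{\mathrm{vir}}$ appearing in \eqref{rr}. The heart of the matter is the insertion: lifting the $\PP^1$ point class to the equivariant class $[0]$, which restricts to the tangent weight $t$ at $0$ and to $0$ at $\infty$, annihilates the $\infty$-supported fixed contributions, while over $0$ the divisor relation $\int_S\beta\cup\beta^\vee=1$ collapses $\tau_0(\iota_*\beta^\vee)$ to the pure scalar $t$. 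Once the $\infty$- and node-type loci are shown to drop out (the key point, discussed below), localisation gives
$$\int_{[P_n(Y)]^{\mathrm{red}}}\tau_0(\iota_*\beta^\vee)\=t\int_{[P_n(X,\iota_*\beta)^{\C^*}]^{\mathrm{red}}}\frac1{e(N^{\mathrm{vir}})}.$$
As the left hand side is a $t$-independent number, the integral on the right is forced to be a pure simple pole $c/t$; its residue at $t=0$, which by \eqref{rr} is the definition of $P^{\mathrm{red}}_{n,\beta}(X)$, therefore equals $c$, namely the left hand side. Summing over $\alpha$ recovers $\mathsf Z_P(X,q,v)$.

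The main obstacle is to make the degeneration formula genuinely compatible with the \emph{reduced} class, and to dispose of the self-node of $E_0$. Concretely, one must verify that the reduced cycle degenerates with $S$ as an honest spectator, and that the diagonal matching across the glued pair $S_0\cong S_\infty$ --- together with all mixed and $\infty$-supported, rubber-type fixed loci --- contributes nothing once the insertion has been localised to $0$. It is precisely this vanishing that converts Oberdieck's compact identity \eqref{BP} into a residue integral over the noncompact $X$; by contrast the remaining input, the computation $[0]=t$ and the divisor equation $\beta\cup\beta^\vee=1$, is routine.
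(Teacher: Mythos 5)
Your skeleton --- Oberdieck's identity \eqref{BP}, Proposition \ref{jpo} on the left, degeneration of $E$ to a nodal rational curve followed by $\C^*$-localisation on the right --- is the paper's strategy, but the two points you defer are exactly where the content lies, and one of them is not merely unverified but wrong as stated. After the Li--Wu degeneration, the geometry is $(S\times\PP^1)/(S_0\cup S_\infty)$ with \emph{both} copies of $S$ as relative divisors. In a relative theory no stable pair may have support inside a relative divisor; instead the target expands. Consequently the $\C^*$-fixed loci of this doubly-relative moduli space are rubber-type configurations in bubbles over $0$ and $\infty$, and your assertion that ``near either point $S\times\PP^1$ is analytically $S\times\C=X$, so the relevant fixed loci are the $P_n(X,\iota_*\beta)^{\C^*}$'' fails at $S_0$: the fixed locus there is not $P_n(X,\iota_*\beta)^{\C^*}$ with the obstruction theory of \eqref{rr}, and converting rubber integrals into residue integrals over $X$ would require a separate rubber calculus. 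The paper avoids this entirely by a \emph{second} application of the degeneration formula: degenerating the singly-relative geometry $(S\times\PP^1)/S_\infty$ into $(S\times\PP^1)/S_\infty\cup(S\times\PP^1)/(S_0\cup S_\infty)$, it shows the doubly-relative integral \eqref{form} equals the singly-relative one, and only then localises. In the singly-relative geometry the divisor $S_0$ is absolute, so the fixed locus over $0$ really is $P_n(X,\iota_*\beta)^{\C^*}$ and the insertion collapses to $t$ as you say.

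The second deferred point --- that mixed contributions, with charge split between the two ends, ``drop out'' --- is the engine of the whole proof, not a routine check, and it is not supplied by the equivariant lift of the insertion (that lift only kills contributions supported \emph{entirely} over $\infty$). The paper's argument is a double-cosection vanishing: if the charge splits as $(\beta_1,n_1)+(\beta_2,n_2)$ with both pieces nonzero, the support has two disjoint components, each contributing a surjective (semiregularity) cosection of the obstruction theory; the reduced theory removes only one of them, so a surjective cosection survives and the reduced virtual class vanishes. This single argument disposes of the mixed terms both in the second degeneration step and in the localisation step, and it is also what makes the gluing fibre product harmless (in class $\iota_*\beta$ any relative stable pair restricts trivially to the relative divisors, so the matching condition at the node is vacuous). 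Without the second degeneration and the cosection vanishing, your proposal does not close; with them, it becomes the paper's proof.
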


\begin{proof}
Taking $E$ to be a 1-nodal rational elliptic curve,
the degeneration formula \cite{Li, LW} expresses the right hand side of \eqref{BP} as
\beq{form}
\int_{\big[P_n\big((S\times\PP^1)/(S_0\cup S_\infty),\,\iota_*\beta\big)\big]^{\mathrm{red}}}
\tau_0(\iota_*\beta^\vee).
\eeq
That is, we work with stable pairs on $S\times\PP^1$ relative to the divisors $S_0:=S\times\{0\}$ and $S_\infty:=S\times\{\infty\}$. The fibre product (matching the stable pairs over $S_0$ with those over $S_\infty$ so they can be glued together) imposes no condition since in the class $\iota_*\beta$ any relative stable pair is canonically trivial on the relative divisors.

We now calculate \eqref{form} as in \cite[Sections 7.3--7.4]{PT3}. We start by considering the relative geometry $S\times\PP^1/S_\infty$ and degenerating it to
$$
(S\times\PP^1)/S_\infty\ \mathop{\cup}_{S_\infty\sim S_0}\ (S\times\PP^1)/(S_0\cup S_\infty).
$$
The degeneration formula gives
\begin{multline*}
\int_{\big[P_n\big((S\times\PP^1)/S_\infty,\,\iota_*\beta\big)\big]^{\mathrm{red}}}
\tau_0(\iota_*\beta^\vee)\= \\
\sum\int_{\big[P_{n_1}\big((S\times\PP^1)/S_\infty,\,\iota_*\beta_1\big)\times P_{n_2}\big((S\times\PP^1)/(S_0\cup S_\infty),\,\iota_*\beta_2\big)\big]^{\mathrm{red}}}
1\times\tau_0(\iota_*\beta^\vee),
\end{multline*}
where the sum is over all $(\beta_1,n_1),\,(\beta_2,n_2)$ whose sum is $(\beta,n)$. If both of $(\beta_1,n_1),\,(\beta_2,n_2)$ are nonzero then the obstruction theory admits two surjective cosections corresponding to the two components of support of the stable pair. Since the reduced obstruction theory removes only one, it still admits one surjective cosection, so the reduced cycle is zero.

When $(\beta_1,n_1)=(\beta,n)$ and $(\beta_2,n_2)=0$ the integral is zero because of the insertion. So we are left only with $(\beta_2,n_2)=(\beta,n),\ (\beta_1,n_1)=0$, which contributes \eqref{form}. Therefore, by \eqref{BP},
$$
e\big(P_n(Y,\iota_*\beta)/E,\chi^B\big)\=
\int_{\big[P_n\big((S\times\PP^1)/S_\infty,\,\iota_*\beta\big)\big]^{\mathrm{red}}}
\tau_0(\iota_*\beta^\vee).
$$
Then we apply virtual localisation to the usual $\C^*$ action on $\PP^1$ with weight $+1$ on the tangent space at 0. We lift $\iota_*\beta^\vee$ to $H^4_{\C^*}(S\times\PP^1)$ by letting $\iota$ be the inclusion of the $\C^*$-invariant divisor $S\times\{0\}\into S\times\PP^1$.

By the same double cosection argument the contributions vanish unless all the charge $(\beta,n)$ is at one of $S_0$ or (bubbles over) $S_\infty$. And again the insertion $\tau_0(\iota_*\beta^\vee)$ kills the contribution of pairs supported entirely in (bubbles over) $S_\infty$. Therefore we localise everything to $S_0$ where the result becomes
$$
\int_{[P_n(X,\,\iota_*\beta)^{\C^*}]^{\mathrm{red}}}\,\frac1{e(N^{\mathrm{vir}})}\,\tau_0(\iota_*\beta^\vee).
$$
Over $S_0$ the insertion $\tau_0(\iota_*\beta^\vee)$ is just $c_1(\t)\int_S\beta\cup\beta^\vee=t$, the equivariant parameter, so we end up with
$$
\int_{[P_n(X,\,\iota_*\beta)^{\C^*}]^{\mathrm{red}}}\,\frac t{e(N^{\mathrm{vir}})}\=\mathrm{Res}\_{\;t=0}\int_{[P_n(X,\,\iota_*\beta)^{\C^*}]^{\mathrm{red}}}\,\frac 1{e(N^{\mathrm{vir}})}\=
P_{n,\beta}^{\mathrm{red}}(X).
$$
Combining with Proposition \ref{jpo} gives the result.
\end{proof}

\section{Euler}
Theorem \ref{DM} relates $\mathsf Z_P^{\red}(X,q,v)$ and $\mathsf z^\chi_P(X,q,v)$. The former has now been computed by the KKV formula \cite{PT3}. For the latter, we have Toda's conjectural weighted Euler characteristic version of the KKV conjecture \cite{To1}. In fact Toda works with  ``naive" unweighted stable pair invariants defined by bare Euler characteristic. So to deduce his conjecture we need to relate the naive and weighted invariants using a version of dimensional reduction.

By localisation, it is sufficient to show
the Behrend function is always $\pm1$ at $\C^*$-fixed points of the moduli space of stable pairs on $X=K_S=S\times\C$. In this section we show how to do this, though we only sketch the derived stacks technicalities involved. By now Toda has given a much more careful and professional treatment in \cite[Section 6]{To3}, in fact proving more general results. \medskip


We start with $\cM_S$, the moduli stack of pure coherent sheaves $E$ on $S$ with support of dimension $1$ in some fixed curve class $\beta\in H_2(S)$, and fixed holomorphic Euler characteristic $n\gg0$. This has a natural derived Artin stack structure \cite{TV}. There is a closely related derived stack $\cP_S$ of pairs $(E,s)$, where $s\in H^0(E)$. The fibre over $E\in\cM_S$ of the forgetful map
\beq{fibr}
\cP_S\ \To\ \cM_S
\eeq
has underlying scheme $H^0(E)$ and virtual relative tangent bundle $R\Gamma(E)$. There is a corresponding exact triangle of virtual tangent bundles
$$
R\Gamma(E)\To R\Hom(I\udot_S,E)\To R\Hom(E,E)[1]
$$
at $(E,s)$, where $I\udot_S$ is the complex $\cO_S\rt{s}E$ (with $\cO_S$ in degree 0).

Let $\cP^0_S\subset\cP_S$ denote the open substack of pairs $(E,s)$ for which $\Ext^1(E,E)\rt{s}H^1(E)$ onto. Combined with the vanishing of $H^2(E)$ (since $E$ has dimension $1$) this makes the $\Ext^*(\ \cdot\ ,E)$ exact sequence of the exact triangle $I\udot_S\to\cO_S\to E$ collapse to give
\beq{obzz}
\Ext^1(I\udot_S,E)\ \cong\ \Ext^2(E,E)\ \cong\ \Hom(E,E\otimes K_S)^*,
\eeq
the last isomorphism being Serre duality. \medskip

Now let $X:=K_S\rt{p}S$ be the total space of the canonical bundle of $S$, and let $P_X=P_n(X,\iota_*\beta)$ denote the moduli \emph{scheme} of stable pairs $(\cE,s)$ of class $(\iota_*\beta,n)$. Pushing down defines a map $p_*\colon P_X\to\cP_S$ taking $(\cE,s)$ to $(E:=p_*\;\cE,p_*s)$.

\begin{lem} The map $p_*$ factors through $\cP_S^0\subset\cP_S$.
\end{lem}

\begin{proof} (Cf. \cite[Proof of Proposition C.2]{PT2}.) 
We need to show that $\Ext^1(E,E)\rt{p_*s} H^1(E)$ is onto, where $E=p_*\;\cE$.

Let $C\subset X$ be the support of $\cE$ and consider the maps
\begin{multline*}
H^1(\cO_C)\rt{\id_\cE}H^1_X(\hom(\cE,\cE))\Into\Ext^1_X(\cE,\cE)\To
\\ \Ext^1_S(E,E)\rt{p_*s}H^1(E)\ \cong\ H^1(\cE).
\end{multline*}
The third arrow is given by adjunction and the evaluation map $p^*E=p^*p_*\;\cE\to\cE$. The composition is multiplication by $s$, and is therefore onto since dim\,coker\,$s=0$ implies $H^1(\coker s)=0$. Thus the final arrow $p_*s$ must also be onto.
\end{proof}

The projection $p_*\colon P_X\to\cP_S^0$ in fact exhibits $P_X$ as an open substack (with trivial stabilisers) of the $(-1)$-shifted cotangent bundle
\beq{shift}
P_X\ \subset\ T^*[-1]\cP_S^0.
\eeq
As a scheme $T^*[-1]\cP^0_S$ is the total space of the dual obstruction sheaf of $\cP_S^0$, with closed points given by  triples $(E,s,\phi)$,
$$
E\in\mathrm{Coh}(S), \quad s\in H^0(E), \quad \phi\in\Hom(E,E\otimes K_S)\cong\Ext^1(I\udot_S,E)^*
$$
by \eqref{obzz}. This triple is equivalent to a pair $(\cE_\phi,s)$ on $X$ by the spectral construction. That is, $\cE_\phi$ on $K_S=X$ is the eigensheaf\;\footnote{The action of $\phi$ makes the $\cO_S$-module $E$ into a Sym${}^\bullet K_S^{-1}=p_*\cO_X$-module.} of the Higgs field $\phi$, and $s\in H^0(E)=H^0(p_*\;\cE_\phi)=H^0(\cE_\phi)$.

The derived scheme structure on $P_X$ pulled back from \eqref{shift} is quasi-smooth, inducing the usual (\emph{not} reduced!) symmetric perfect obstruction theory on $P_X$. Moreover the description \eqref{shift}
expresses $P_X$ as a derived critical locus with weight 1 potential --- i.e. its quasi-smooth derived structure (and in particular its obstruction theory) arises from seeing it locally as
$$
\mathrm{Crit}(f)\ \subset\ U,
$$
where $U$ is a smooth ambient variety with a $\C^*$ action and $f\in\cO(U)$ is $\C^*$-equivariant with weight 1. The local model about a point $(\cE_\phi,s)$ is given by the following standard construction.

Suppose that $E=p_*\;\cE_\phi\in\cP^0_S$ has stabiliser group $G$. Then locally about $E$, the derived stack $\cP^0_S$ is isomorphic to the quotient by $G$ of the zero locus of a $G$-invariant section $\sigma$ of a $G$-equivariant vector bundle $F$ over a smooth ambient $G$-space $A$:
$$
\spreaddiagramcolumns{-2pc}
\xymatrix{
& F\ar[d] \\
Z(\sigma)\ \subset & A,\ar@/^{-1.5ex}/[u]_{\sigma} &&&&&& \cP^0_S\,\stackrel{\mathrm{loc}}=\,Z(\sigma)/G.}
$$
Near $(\cE_\phi,s)\in P_X\subset T^*[-1]\cP^0_S$, therefore, $T^*[-1]\cP^0_S$ inherits the following local description. We work in the bigger ambient space $\widetilde A:=F^*$, the total space of the dual of the vector bundle $F\to A$ over the old ambient space. On this we have the $G$-invariant function
$$
\widetilde\sigma\colon\widetilde A=F^*\To\C
$$
given by thinking of $\sigma\in\Gamma(F)$ as a linear functional on the fibres of $F^*$. By $G$-invariance its derivative lies in
\beq{dg}
d\;\widetilde\sigma\ \in\ \Gamma\Big(\!\ker\big(\Omega_{\widetilde A}\To\mathfrak g^*\big)\Big)
\eeq
and cuts out Crit$\,(\widetilde\sigma)\,\subset\,\widetilde A$. Dividing by $G$ gives the local model of $T^*[-1]\cP^0_S$, with its derived structure coming from thinking of \eqref{dg} as lying in the dg vector bundle $\Omega_{\widetilde A}\to\mathfrak g^*$.

Since the \emph{stable} pair $(\cE_\phi,s)$ has no automorphisms, the $G$-action is free there. In a neighbourhood then, $\Omega_{\widetilde A}\to\mathfrak g^*$ is onto with kernel $\Omega_{\widetilde A/G\,}$. Therefore over the open sub\emph{scheme} $P_X\subset T^*[-1]\cP^0_S$ we find the local description
\beq{model}
\spreaddiagramcolumns{-2pc}
\xymatrix{
& \Omega_{\widetilde A/G}\ar[d] \\
Z\big(d\;\widetilde\sigma\big)\ \subset\! & \widetilde A,\ar@/^{-1.5ex}/[u]_{d\;\widetilde\sigma} &&&&&& P_X\,\subset\,T^*[-1]\cP^0_S\,\stackrel{\mathrm{loc}}=\,Z\big(d\;\widetilde\sigma\big)/G.}
\eeq
This description makes $P_X$ into a $d$-critical scheme $(P_X,[\widetilde\sigma])$ in the sense of \cite{Jo}. Here $[\widetilde\sigma]$ is a section of Joyce's sheaf\;\footnote{For any local embedding of $P_X$ in a smooth ambient space $U$ with ideal $I$, it is the kernel of
$d\colon\frac{\sqrt I}{I^2}\ \To\ \frac{\Omega_U}{I\cdot\Omega_U}\,.$} $\cS^0_{P_X}$ on $P_X$ intrinsically defined by the scheme structure on $P_X$. In the above local chart $[\widetilde\sigma]$ is defined by simply restricting $\widetilde\sigma$ to the doubling of $P_X$ inside $\widetilde A/G$.

The $d$-critical scheme $(P_X,[\widetilde\sigma])$ is naturally ``\emph{oriented}": the determinant of the virtual cotangent bundle\footnote{In the language of Behrend-Fantechi, $\LL^{\mathrm{der}}$ is the perfect obstruction theory $E^\bullet$, which in turn is $T_{\widetilde A/G}\rt{d\widetilde\sigma}\Omega_{\widetilde A/G}$ in the local patch constructed above.} of its quasi-smooth derived structure
\beq{or}
\det\LL^{\mathrm{der}}_{T^*[-1]\cP^0_S}\big|_{P_X}\ =\ \Big(\!\det\LL^{\mathrm{der}}_{\cP^0_S}\big|_{P_X}\Big)^2
\eeq
has a natural square root given by the (pullback of) the determinant of the virtual cotangent bundle of the derived structure on $\cP_S^0$.

Since the potential function $\widetilde\sigma$ in the local model is $\C^*$-equivariant with weight 1 under the obvious scaling $\C^*$ action on $T^*[-1]\cP^0_S$ (equivalently the $\C^*$-action induced on $P_X$ by the usual one on $X=K_S$), the section $[\widetilde\sigma]\in\Gamma\big(\cS^0_{P_X}\big)$ defining the $d$-critical scheme structure $(P_X,[\widetilde\sigma])$ also has $\C^*$-weight 1. The $\C^*$-action preserves the orientation \eqref{or} and is circle compact (for any $p\in P_X$, there is a limit in $P_X$ of $\lambda\cdot p$ as $\lambda\in\C^*$ tends to 0).

Therefore we may apply the following result from \cite{Ma}. (It is an instance of dimensional reduction, and can be proved along the lines of the proof of \cite[Theorem 5.9]{BenSven}.)  We let $\overline\cM^{\hat\mu}_{P_X}$ denote the ring of $\hat\mu$-equivariant motives on $P_X$ \cite[Section 5]{BBBJ}, where $\hat\mu$ is the projective limit of the finite groups of roots of unity in $\C^*$. For any $\C^*$ fixed point $p\in P_X$ let
$$
\iota_p\colon U_p\ \Into\ P_X
$$
be the inclusion of the ascending cell
$$
U_p\ :=\ \big\{q\in P_X\colon\lim_{\C^*\ni\lambda\to0}\lambda\cdot q=p\big\},
$$
and let $\pi_p\colon U_p\to\{p\}$ be the projection.

\begin{thm} \textbf{\emph{\cite{Ma}}}
Let $MF\in\overline\cM^{\hat\mu}_{P_X}$ be the global motivic vanishing cycle of \cite[Corollary 5.17]{BBBJ}, defined by the oriented $d$-critical scheme structure $(P_n(X,\iota_*\beta),[\widetilde\sigma])$. Then
\beq{motres}
\pi_{p*\;}\iota_p^*\,MF\ =\ \mathbb L^{n_+/2},
\eeq
where $n_+$ is the rank of the positive weight part of the virtual tangent bundle $(\LL^{\mathrm{der}}_{P_X})^\vee\big|_p$ at $p$.
\end{thm}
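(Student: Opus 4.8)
The plan is to verify that the three hypotheses singled out above --- weight-$1$ potential, $\C^*$-preserved orientation, and circle compactness --- place us exactly in the situation of dimensional reduction, and then to compute the localized vanishing cycle in the local model. First I would observe that both $\iota_p^*$ and $\pi_{p*}$ are local operations concentrated along the attracting cell $U_p$, so the left-hand side of \eqref{motres} depends only on the $d$-critical structure of $P_X$ in an analytic neighbourhood of $p$. Hence I may replace $P_X$ by its local chart $Z(d\widetilde\sigma)/G\subset\widetilde A/G$ from \eqref{model}, and replace $MF$ by the motivic vanishing cycle $MF_{\widetilde\sigma}$ of the single function $\widetilde\sigma\colon\widetilde A=F^*\to\C$, descended along the free $G$-action near the stable point $p$.

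The key structural input is that $\widetilde\sigma$ is by construction \emph{linear on the fibres} of the vector bundle $F^*\to A$, being the section $\sigma\in\Gamma(F)$ read as a fibrewise-linear functional. This is precisely the hypothesis of the dimensional reduction theorem for motivic vanishing cycles: for a fibrewise-linear potential on the total space of a dual bundle, the vanishing cycle carries \emph{trivial} $\hat\mu$-monodromy and is supported on the zero section restricted to $Z(\sigma)$, where it equals the constant motive twisted by $\LL^{\rk F/2}$. I would therefore quote (or reprove by the explicit fibrewise integration computation) this reduction to conclude that, over the relevant locus, $MF_{\widetilde\sigma}$ is a pure Tate motive --- a single power of $\LL$ times the constant motive --- with no genuine monodromy.

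It then remains to combine this with the $\C^*$-action. Because $\widetilde\sigma$ has weight $1$ and the action is circle compact, the Bialynicki--Birula flow is defined throughout the local chart, so $U_p$ is a well-defined locally closed attracting cell and $\iota_p^*\,MF$ makes sense; the orientation being $\C^*$-fixed guarantees that the square root chosen in \eqref{or} is transported consistently along the flow, so the vanishing-cycle motive is genuinely $\C^*$-equivariant and its restriction to $U_p$ is canonical. Feeding the pure Tate motive produced by dimensional reduction through $\pi_{p*}\iota_p^*$ then collapses it to a single Lefschetz power, and a bookkeeping of exponents --- the dimensional-reduction twist $\LL^{\rk F/2}$ against the attracting directions contributing to $U_p$ and the repelling cotangent directions removed by the reduction --- identifies the result with $\LL^{n_+/2}$, where $n_+$ is the rank of the positive-weight part of $(\LL^{\mathrm{der}}_{P_X})^\vee|_p$, which is \eqref{motres}.

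The main obstacle I anticipate is not the dimensional reduction itself but its compatibility with $\C^*$-equivariant localization: a priori the motivic vanishing cycle does not commute with restriction to attracting cells, and one must use the weight-$1$ hypothesis to rule out monodromy and boundary contributions and to pin down the exponent. The weight-$1$ condition is exactly what forces the fibre directions of $F^*$ to carry the non-trivial part of the flow, so that dimensional reduction and the Bialynicki--Birula localization are carried out in compatible directions; without it the two operations would not factor through one another and the clean answer $\LL^{n_+/2}$ would fail.
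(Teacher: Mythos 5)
A preliminary caveat: the paper itself contains \emph{no} proof of this statement --- it is quoted from the unpublished preprint \cite{Ma} --- so your attempt can only be measured against what a correct argument must supply. Your skeleton (pass to the chart \eqref{model}, apply dimensional reduction to the fibrewise-linear potential $\widetilde\sigma$, combine with the attracting flow) is the right one, but the two steps you treat as routine are exactly where the content lies, and both are wrong as written. Dimensional reduction does not say what you claim: the motivic vanishing cycle of $\widetilde\sigma$ is supported on all of $\mathrm{Crit}(\widetilde\sigma)=\{(a,y)\colon\sigma(a)=0,\ y\perp\im d\sigma_a\}$, not on the zero section. For K3 pairs the fibre direction $\coker(d\sigma_a)^*\cong\Ext^1(I\udot_S,E)^*\cong\Hom(E,E\otimes K_S)$ is nonzero at \emph{every} point (it contains the trace part), so $\mathrm{Crit}(\widetilde\sigma)$ is the whole of $P_X$, on which $MF$ realizes the Behrend function --- certainly not concentrated along $Z(\sigma)=\cP_S$. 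The correct (Davison--Meinhardt) statement concerns the \emph{pushforward} along $\pi\colon\mathrm{Crit}(\widetilde\sigma)\to A$, namely $\pi_![\phi_{\widetilde\sigma}]=\LL^{\rk F}\,[Z(\sigma)]$ with trivial $\hat\mu$-action. Stated this way, your worry about commuting vanishing cycles with attracting cells dissolves (for fixed points on the zero section): the $\C^*$-action is trivial on $A$ and scales the fibres, so $U_p=\mathrm{Crit}(\widetilde\sigma)\cap\pi^{-1}(a_0)$, and $\pi_{p*}\iota_p^*MF$ is just $\pi_![\phi_{\widetilde\sigma}]$ restricted to $a_0$ times the normalisation $\LL^{-\dim\widetilde A/2}$, i.e.\ $\LL^{(\rk F-\dim A)/2}$ --- not your $\LL^{\rk F/2}$. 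Correspondingly one must check $n_+=\rk F-\dim A$: since $\widetilde\sigma$ has weight $1$, its Hessian is a map $T_{\widetilde A}\to\Omega_{\widetilde A}\otimes\t$, so the \emph{equivariant} virtual tangent bundle is $T_{\widetilde A}-\Omega_{\widetilde A}\otimes\t$, with positive part $(F^*-\Omega_A)\otimes\t$ of rank $\rk F-\dim A$. If you drop this twist you get $n_+=\rk F$, and your two errors cancel to a ``proof'' of a statement which is not \eqref{motres}: it already fails for $\widetilde A=\C^2$, $\widetilde\sigma=xy$ with weights $(0,1)$, where $MF|_p=1=\LL^{0/2}$ while $\LL^{\rk F/2}=\LL^{1/2}$. (Your justification for localising is also faulty: $U_p$ is a positive-dimensional cell, not contained in any small analytic neighbourhood of $p$; what saves the reduction is that the chart $F^*|_{A_0}$ is $\C^*$-invariant, hence contains $U_p$ outright.)

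The larger gap is that this argument only treats fixed points with vanishing Higgs field, i.e.\ lying on the zero section $\cP_S\subset T^*[-1]\cP_S$. Most $\C^*$-fixed stable pairs on $X$ have thickened scheme-theoretic support --- the sheaves $F=\oplus_{i=0}^d\,\iota_*F_i\,\t^{-i}$ with $d>0$ appearing in the Corollary that follows --- and for these $\phi\ne0$: the point is fixed only because scaling $\phi$ can be undone by an automorphism of $(E,s)$, so it lies over a \emph{stacky} point of $\cP_S$. To linearise the $\C^*$-action on the chart at such a point one must twist the fibre scaling by a cocharacter of $G$; the base $A$ then acquires nontrivial weights, $\widetilde\sigma$ is no longer fibrewise linear over a weight-zero base, and the attracting cell $U_p$ mixes base and fibre directions. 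Your identification ``dimensional reduction $=$ integration over the attracting directions'' breaks down precisely there, and evaluating $\pi_{p*}\iota_p^*MF$ requires a genuinely equivariant statement (of the type of the Kontsevich--Soibelman integral identity, or the motivic-residue techniques that \cite{Ma} develops). Since this case is the one actually used in the Corollary's computation, the proposal as it stands misses the heart of the theorem.
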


\begin{cor}
The fixed points $p$ of $P_n(X,\iota_*\beta)$ have trivial Behrend function: $\chi^B\big|_p=(-1)^n$.
\end{cor}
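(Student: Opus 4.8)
The plan is to read off the pointwise value of $\chi^B$ at a $\C^*$-fixed point $p$ directly from the motivic identity \eqref{motres} by passing to Euler characteristics. The oriented $d$-critical structure $(P_n(X,\iota_*\beta),[\widetilde\sigma])$ determines both the motivic vanishing cycle $MF\in\overline\cM^{\hat\mu}_{P_X}$ and the Behrend function $\chi^B$, and these are compatible: there is an Euler-characteristic realisation $\overline\cM^{\hat\mu}_{P_X}\to\mathrm{Con}(P_X,\Z)$ to constructible $\Z$-valued functions, sending a family to its fibrewise compactly-supported Euler characteristic and $\LL^{1/2}\mapsto-1$, under which $MF$ maps to $\chi^B$ \cite{BBBJ,Be}. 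First I would apply this realisation to both sides of \eqref{motres}.

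The right hand side $\LL^{n_+/2}$ realises to $(-1)^{n_+}$. The left hand side $\pi_{p*}\iota_p^*MF$ realises to the $\chi^B$-weighted Euler characteristic $e\big(U_p,\chi^B|_{U_p}\big)$ of the ascending cell. To collapse this to the single value $\chi^B(p)$ I would use that $\chi^B$ is $\C^*$-invariant and that $U_p$ is a cone with apex $p$: by definition every $q\in U_p$ flows to $p$, so $p$ is the unique fixed point in $U_p$. The complement $U_p\setminus\{p\}$ is a union of $\C^*$-orbits with finite stabilisers, each of Euler characteristic zero, on which $\chi^B$ is constant; multiplicativity of the Euler characteristic over the orbit fibration $U_p\setminus\{p\}\to(U_p\setminus\{p\})/\C^*$ (with fibre $\C^*$) then gives $e\big(U_p\setminus\{p\},\chi^B\big)=0$, whence $e\big(U_p,\chi^B\big)=\chi^B(p)$. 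Combining, $\chi^B(p)=(-1)^{n_+}$.

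It then remains to compute the parity of $n_+$. Since $P_X\subset T^*[-1]\cP_S$ with the $\C^*$-action trivial on $\cP_S$ and of weight $1$ on the cotangent/Higgs direction $\t$, the virtual tangent bundle at $p$ splits as $T^{\mathrm{vir}}_{P_X}\big|_p=T^{\mathrm{vir}}_{\cP_S}\oplus\big(T^{\mathrm{vir}}_{\cP_S}\big)^\vee\otimes\t$, the two summands having weight $0$ and $1$. As the shifted cotangent fibre sits in odd degree, the positive-weight part has virtual rank $n_+=-\vd(\cP_S)$; in any case only its parity matters. Using the triangle $R\Gamma(E)\to R\Hom(I\udot_S,E)\to R\Hom(E,E)[1]$ and Riemann--Roch on the K3 surface, $\vd(\cP_S)=\chi(E)-\chi(E,E)=n+\beta^2$. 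Because the intersection form of a K3 surface is even, $\beta^2$ is even, so $(-1)^{n_+}=(-1)^{n+\beta^2}=(-1)^n$, as required.

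I expect the main obstacle to be the realisation step rather than the parity count. One must verify that the Euler-characteristic realisation genuinely intertwines the motivic operation $\pi_{p*}\iota_p^*$ with the $\chi^B$-weighted Euler characteristic of $U_p$, and in particular pin down the normalisation so that $MF$ realises to $\chi^B$ with no residual shift (a shift would corrupt the final sign); the cone/multiplicativity argument also tacitly needs $U_p$ and its orbit fibration to be well enough behaved. The parity computation, by contrast, is routine once the $T^*[-1]\cP_S$ description and the even intersection form of the K3 are in hand.
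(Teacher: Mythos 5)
Your first two steps --- realising the motivic identity \eqref{motres} by sending $\LL^{1/2}\mapsto-1$ so that $MF$ becomes $\chi^B$, and then collapsing the weighted Euler characteristic of the ascending cell $U_p$ to the single value $\chi^B(p)$ because the $\C^*$-action on $U_p\setminus\{p\}$ has no fixed points --- are exactly the paper's argument, and the compatibility of the motivic vanishing cycle with the Behrend function that you worry about is precisely what \cite{BBBJ} supplies. The problem is elsewhere, in the parity count.

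You assume that at a $\C^*$-fixed point the virtual tangent complex splits as $T^{\vir}_{\cP_S}\oplus\big(T^{\vir}_{\cP_S}\big)^{\vee}\otimes\t$ in weights $0$ and $1$. That splitting is the restriction of the tangent complex of $T^*[-1]\cP_S$ to the \emph{zero section}, so it is valid only at fixed pairs with vanishing Higgs field, i.e.\ whose sheaf is \emph{scheme-theoretically} supported on $S\times\{0\}$. But $\C^*$-fixed stable pairs are only \emph{set-theoretically} supported there: unless $\beta$ is irreducible there are fixed pairs whose support is thickened in the $\C$-direction, with nonzero nilpotent Higgs field --- for instance $\big(\cO_{C\times D_1},1\big)$ with $D_1=\Spec\C[z]/(z^2)$, a stable pair in class $2\iota_*[C]$. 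Such a pair is fixed not because $\phi=0$ but because rescaling $\phi$ is undone by an automorphism of $E=\pi_*\;\cE$, namely the grading operator of the weight decomposition $F=\oplus_{i}\,\iota_*F_i\;\t^{-i}$. This grading enters the $\C^*$-structure on the deformation theory, which then contains pieces $R\Hom_S(F_i,F_j)$ in weights $i-j$ and $i-j+1$, together with $R\Gamma(F_i)$-terms in weights $-i$ and $i+1$ --- not just weights $0$ and $1$ --- so your identification $n_+=-\vd(\cP_S)$ fails at exactly these points. The paper handles this by computing $n_+$ from the full weight decomposition: the cross terms $i\ne j$ cancel in pairs (using $\chi_S(F_i,F_j)=\chi_S(F_j,F_i)$ on a K3), leaving $n_+=-\chi(F)+\sum_i\chi_S(F_i,F_i)$, and each $\chi_S(F_i,F_i)=-\beta_i^2$ is even by the evenness of the K3 intersection form. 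Note that the cases you miss are not marginal: divisible classes $\beta$ are the whole point of the multiple cover formula application later in the paper. Your computation is correct, and agrees with the paper's formula, precisely when $F=\iota_*F_0$ lies on the zero section; what is missing is the weight bookkeeping for the thickened fixed pairs.
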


\begin{proof}
To take Euler characteristics in \eqref{motres} we send $\LL^{1/2}$ to $-1$, yielding
\beq{amic}
\pi_{p*\;}\iota_p^*\,\chi^B\=(-1)^{n_+}.
\eeq
Since $U_p\take\{p\}$ has a $\C^*$ action without fixed points, it contributes 0 to $\pi_{p*\;}\iota_p^*\,\chi^B$. Therefore \eqref{amic} is just $\chi^B|_p$.

So it is left to show that $n_+\equiv n\pmod2$. In equivariant K-theory, the class of the virtual tangent bundle restricted to the point $(F,s)$ with $\chi(F)=n$ is
$$
R\Hom(I\udot,I\udot)\_0[1]\=R\Hom(\cO_X,F)+R\Hom(F,\cO_X)-R\Hom(F,F),
$$
where $I\udot$ is the complex $\cO_X\rt{s}F$. Decomposing $F=\oplus_{i=0}^d\,\iota_*F_i\,\t^{-i}$ into weight spaces, where $F_i$ is supported on $S$, we get
$$
\bigoplus_{i=0}^dR\Gamma(F_i)\;\t^{-i}-
\bigoplus_{i=0}^dR\Gamma(F_i)^*\t^{i+1}
-\bigoplus_{i,j=0}^dR\Hom(\iota_*F_i,\iota_*F_j)\;\t^{i-j}.
$$
Using adjunction and $\iota^*\iota_*F_i=F_i\,\oplus\;F_i\!\;\otimes\!K_S^{-1}[1]$ in K-theory, another application of Serre duality gives
\begin{multline*}
\bigoplus_{i=0}^dR\Gamma(F_i)\;\t^{-i}-
\bigoplus_{i=0}^dR\Gamma(F_i)^*\t^{i+1}
-\bigoplus_{i,j=0}^dR\Hom\_S(F_i,F_j)\;\t^{i-j} \\
+\bigoplus_{i,j=0}^dR\Hom\_S(F_i,F_j\otimes K_S)\;\t^{i-j+1}.
\end{multline*}
Since $K_S=\cO_S$, taking ranks of the positive weight pieces gives
\beqa
n_+ &=& -\sum_{i=0}^d\chi(F_i)-\sum_{i>j}\chi\_S(F_i,F_j)+\sum_{i\ge j}\chi\_S(F_j,F_i) \\
&=& -\chi(F)+\sum_{i=0}^d\chi\_S(F_i,F_i)\ \equiv\ n\pmod 2
\eeqa
because the intersection form of a K3 surface is even.
\end{proof}

\begin{cor}\label{kaichi}
The generating series of ``naive" Euler characteristic invariants
$$
\mathsf z^{\mathrm{na}}_P(X,q,v)\ :=\ \sum e\big(P_n(X,\iota_*\beta)\big)q^nv^\beta
$$
is more-or-less the same as $\mathsf z^\chi_P:$
$$
\mathsf z^{\mathrm{na}}_P(X,q,v)\=\mathsf z^\chi_P(X,-q,v).
\vspace{-6.5mm}$$
$\hfill\square$
\end{cor}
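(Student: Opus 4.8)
The plan is to prove the Corollary charge by charge: for each $(n,\beta)$ I would show that the weighted and unweighted invariants differ by the universal sign $(-1)^n$,
$$
e\big(P_n(X,\iota_*\beta),\chi^B\big)\=(-1)^n\,e\big(P_n(X,\iota_*\beta)\big),
$$
after which the stated relation between generating series is immediate. Indeed, granting this, $\mathsf z^\chi_P(X,q,v)=\sum_{\beta,n}(-1)^n\,e(P_n(X,\iota_*\beta))\,q^nv^\beta$, and the substitution $q\mapsto-q$ multiplies the $q^n$ term by $(-1)^n$, cancelling the sign and returning precisely $\mathsf z^{\mathrm{na}}_P(X,q,v)$.

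The key step is $\C^*$-localisation of (weighted) Euler characteristics, of exactly the kind already exploited in the proof of Proposition \ref{jpo}. The $\C^*$ action on $P_n(X,\iota_*\beta)$ preserves $\chi^B$, since the Behrend function is defined analytically while the action is algebraic. Away from the fixed locus $P_n(X,\iota_*\beta)^{\C^*}$ every orbit is a copy of $\C^*$ modulo a finite stabiliser, hence of zero Euler characteristic; so the complement of the fixed locus contributes nothing to either the naive or the $\chi^B$-weighted Euler characteristic. Both invariants therefore localise to the fixed locus:
$$
e\big(P_n(X,\iota_*\beta)\big)\=e\big(P_n(X,\iota_*\beta)^{\C^*}\big),\qquad
e\big(P_n(X,\iota_*\beta),\chi^B\big)\=e\big(P_n(X,\iota_*\beta)^{\C^*},\,\chi^B\big|_{\mathrm{fixed}}\big).
$$

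Next I would invoke the preceding Corollary, which computes $\chi^B\big|_p=(-1)^n$ at \emph{every} $\C^*$-fixed point $p$. Thus the restriction of $\chi^B$ to the fixed locus is the constant \emph{constructible} function $(-1)^n$, so the right-hand weighted Euler characteristic above equals $(-1)^n\,e(P_n(X,\iota_*\beta)^{\C^*})$. Comparing the two displayed localisations then yields the term-by-term identity, and hence the Corollary.

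The genuine content here is entirely the preceding Corollary; given that, what remains is bookkeeping and the elementary localisation principle. The only point that needs a little care is that the fixed locus may be positive-dimensional, so $\chi^B\big|_{\mathrm{fixed}}$ must be read as a constructible function rather than a value at an isolated point — but since the preceding Corollary supplies the constant value $(-1)^n$ at all fixed points simultaneously, this presents no real obstacle.
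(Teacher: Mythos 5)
Your proof is correct and is exactly the argument the paper intends: the Corollary is stated with no separate proof precisely because, as the paper says earlier in the section, ``by localisation it is sufficient to show the Behrend function is always $\pm1$ at $\C^*$-fixed points,'' which is the preceding Corollary's content $\chi^B|_p=(-1)^n$. Your localisation of both the naive and $\chi^B$-weighted Euler characteristics to the fixed locus, followed by the sign cancellation under $q\mapsto -q$, is the same bookkeeping the authors leave implicit.
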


\section{Toda}
In this Section we will
combine our results so far with the KKV formula \cite{KKV, MP} proven in \cite{PT3}. Let $N_{g,\beta}^{\mathrm{red}}(X)$ denote the reduced connected residue Gromov-Witten invariants of $X$ defined by virtual $\C^*$-localisation. Rewriting their generating series
$$
\mathsf F^{\mathrm{red}}_{\mathrm{GW}}(X,u,v)\ :=\ \sum_{g\ge0,\,\beta\ne0}N_{g,\beta}^{\mathrm{red}}(X)u^{2g-2}v^\beta.
$$
in ``BPS form",
\beq{BPSform}
F_{\mathrm{GW}}^{\mathrm{red}}(X,u,v)\=
\sum_{g\geq 0,\,\beta\ne0}
 n_{g,\beta}\,u^{2g-2} \sum_{d>0}
\frac{1}{d}\left( \frac{\sin
({du/2})}{u/2}\right)^{2g-2}v^{d\beta},
\eeq
defines the Gopakumar-Vafa invariants $n_{g,\beta}\in\Q.$ Then by \cite{PT3} the $n_{g,\beta}$ are in fact integers $n_{g,h}\in\Z$ which depend only on $\beta$ through its self intersection
$$
\int_S\beta^2\=2h-2.
$$
The $n_{g,h}$ are nonzero only for $0\le g\le h$ and are determined by the KKV formula
\beq{kkvee}
\sum_{g\geq 0} \sum_{h\geq 0} (-1)^g n_{g,h}(y^{\frac{1}{2}} - y^{-\frac{1}{2}})^{2g}q^h\,=\,\prod_{n\geq 1} \frac{1}{(1-q^n)^{20} (1-yq^n)^2 (1-y^{-1}q^n)^2}
\eeq

\begin{thm}
The various enumerative invariants of $X$ are related by
\begin{eqnarray}\nonumber
\mathsf F_{\mathrm{GW}}^{\mathrm{red}}(X,u,v)\=\mathsf Z_P^{\red}(X,q,v) &=& -\log\big(1+\mathsf z_P^\chi(X,q,v)\big) \\
&=&-\log\big(1+\mathsf z^{\mathrm{na}}_P(X,-q,v)\big)\label{list}
\end{eqnarray}
under the substitution $q=-e^{iu}$. Furthermore, all are given by the KKV formula \eqref{kkvee}.
\end{thm}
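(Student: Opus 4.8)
The plan is to assemble all four expressions from results already in hand, so that the theorem collapses to a short chain of equalities together with the final KKV identification. Two of the three links are immediate. The middle equality $\mathsf Z_P(X,q,v)=-\log\big(1+\mathsf z_P^\chi(X,q,v)\big)$ is exactly Theorem \ref{DM} (equivalently Theorem \ref{it}). The last equality is read off from Corollary \ref{kaichi}: substituting $q\mapsto-q$ in the identity $\mathsf z_P^{\mathrm{na}}(X,q,v)=\mathsf z_P^\chi(X,-q,v)$ gives $\mathsf z_P^{\mathrm{na}}(X,-q,v)=\mathsf z_P^\chi(X,q,v)$, whence applying $-\log(1+\,\cdot\,)$ to both sides yields $-\log\big(1+\mathsf z_P^\chi(X,q,v)\big)=-\log\big(1+\mathsf z_P^{\mathrm{na}}(X,-q,v)\big)$.

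This reduces the theorem to the first equality $\mathsf F_{\mathrm{GW}}^{\mathrm{red}}(X,u,v)=\mathsf Z_P(X,q,v)$ and to the KKV evaluation, both of which I would take from \cite{PT3}. The content imported is the reduced Gromov-Witten/Pairs correspondence for the local K3 surface: under the standard change of variables $q=-e^{iu}$ the reduced residue pair series \eqref{rr2} agrees with the reduced residue Gromov-Witten free energy, and their common value, once rewritten in the Gopakumar-Vafa form \eqref{BPSform}, is governed by the integers $n_{g,h}$ of \eqref{kkvee}. I would cite this as the main result of \cite{PT3}, specialised to the fibre class $\iota_*\beta$ on $X=S\times\C$; concatenating it with the two links above then gives the full chain, and since the Gopakumar-Vafa invariants of any one series are those of all of them, the KKV formula \eqref{kkvee} holds for each.

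Because the deep enumerative input --- the correspondence itself and the KKV evaluation --- is quoted wholesale from \cite{PT3}, the step I expect to demand the most care is the bookkeeping that aligns definitions across the chain. In particular I would verify: that the residue invariants $P^{\mathrm{red}}_{n,\beta}(X)$ of \eqref{rr}, defined by $\mathrm{Res}_{t=0}$-localisation of the \emph{reduced} obstruction theory on $S\times\C$, coincide with the deformation-invariant reduced pair invariants of $S$ that \cite{PT3} computes via the Noether-Lefschetz family $T$ (the factor $1/e(N^{\mathrm{vir}})$ along the $\C$-direction furnishing exactly the reduction); that the Gopakumar-Vafa reorganisation \eqref{BPSform} and the resulting dependence of $n_{g,\beta}=n_{g,h}$ only on $\int_S\beta^2=2h-2$ are those of \cite{PT3}; and that the two sign conventions in play --- the $q=-e^{iu}$ of the correspondence and the $q\mapsto-q$ of Corollary \ref{kaichi} --- are tracked consistently, so that the equalities genuinely concatenate rather than merely hold up to an unaccounted sign.
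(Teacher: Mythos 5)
Your reduction is exactly the paper's: the middle link is Theorem \ref{DM}, and the last link is Corollary \ref{kaichi} after substituting $q\mapsto-q$, just as you say. The problem is your treatment of the first equality $\mathsf F^{\mathrm{red}}_{\mathrm{GW}}(X,u,v)=\mathsf Z_P(X,q,v)$. This is not a single quotable statement of \cite{PT3}, and in particular it is not ``the main result of \cite{PT3} specialised to the fibre class'': the GW/pairs correspondence that \cite{PT3} actually proves (their Theorem 2, quoted as \eqref{wolf} in the paper) is for the K3-fibred 3-fold $T$, not for $X$, and it comes \emph{with} an exponential, $1+\mathsf Z_P(T)=\exp(\mathsf F_{\mathrm{GW}}(T))$. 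The passage from $T$ to $X$ is asymmetric on the two sides of the correspondence, and that asymmetry is the entire content of the first equality. On the GW side the identity $\mathsf F_{\mathrm{GW}}(T)=\mathsf F^{\mathrm{red}}_{\mathrm{GW}}(X)$ is trivial, because $\C^*$-fixed stable maps to $X$ lie scheme-theoretically in the central fibre. On the pairs side the relation is \cite[Corollary 4]{PT3}, namely $\mathsf Z_P(X)=\log\big(1+\mathsf Z_P(T)\big)$ --- a logarithm, not an equality, precisely because $\C^*$-fixed stable pairs on $X$ need \emph{not} lie scheme-theoretically in $S\times\{0\}$. The paper concatenates these three facts, as in \eqref{kk}--\eqref{wolf}, and the log of Corollary 4 cancels the exp of Theorem 2; this cancellation is why the reduced correspondence for $X$ holds \emph{without} the exponential, which is the subtlety the paper emphasises in its discussion of Toda's question after the theorem.

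Your proposed ``bookkeeping check'' --- that the residue invariants $P^{\mathrm{red}}_{n,\beta}(X)$ of \eqref{rr} ``coincide with'' the deformation-invariant pair invariants that \cite{PT3} computes via the Noether-Lefschetz family $T$ --- is exactly where this gap would surface if you carried it out: they do not coincide, they differ by the logarithm above. So while your chain of equalities is the correct statement and the KKV evaluation is legitimately imported from \cite{PT3}, the proof of the first equality as written rests on a citation that does not exist in the form you state; it must be assembled from \cite[Theorem 2]{PT3}, \cite[Corollary 4]{PT3} and the trivial GW identity relating $T$ and $X$, which is what the paper does.
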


\begin{proof}
In \cite[Corollary 4]{PT3} it is proven that
\beq{kk}
\mathsf Z_P^{\red}(X,q,v)\=\log\big(1+\mathsf Z_P^{\red}(T,q,v)\big),
\eeq
where $T$ is (an algebraic approximation to) the twistor space of the K3 surface $S$, and we sum over fibre classes $\alpha=(\beta,n)$ only.
We will combine this with the local Gromov-Witten/stable pairs correspondence conjecture proved in \cite[Theorem 2]{PT3},
\beq{wolf}
1+\mathsf Z_P^{\red}(T,q,v)\=\exp\big(\mathsf F_{\mathrm{GW}}(T,u,v)\big), \qquad q=-e^{iu},
\eeq
where
$$
\mathsf F_{\mathrm{GW}}(T,u,v)\ :=\ \sum_{g\ge0,\,\beta\ne0}N_{g,\beta}(T)u^{2g-2}v^\beta
$$
is the generating series of connected Gromov-Witten invariants of $T$.

We use
the trivial identity\footnote{The reason such a simple identity holds in Gromov-Witten theory is that $\C^*$-fixed stable maps to $X=S\times\C$ all live in the scheme-theoretic central fibre $S\times\{0\}$. In particular $\cM_{g,\beta}(T)=\cM_{g,\beta}(S\times\C)^{\C^*}$. This is not the case for stable pairs.}
$$
\mathsf F_{\mathrm{GW}}(T,u,v)\=\mathsf F_{\mathrm{GW}}^{\mathrm{red}}(X,u,v),
$$
proved by computing the fixed and moving parts of the $\C^*$-equivariant obstruction theory over $\cM_{g,\beta}(X)^{\C^*}\cong\cM_{g,\beta}(S)\cong\cM_{g,\beta}(T)$. 
Combined with \eqref{wolf} and \eqref{kk}, this gives
$\mathsf F^{\mathrm{red}}_{\mathrm{GW}}(X,u,v)\=\mathsf Z_P^{\red}(X,q,v)$. Theorem \ref{DM} then equates this with
$-\log(1+\mathsf z_P^\chi(X))$, and Corollary \ref{kaichi} implies the final claimed identity.
\end{proof}

The equality of the first and last terms of \eqref{list} confirms a  conjecture of Yukinobu Toda \cite{To1}, and proves that stable pair invariants of $X$ defined by weighted or unweighted Euler characteristic satisfy a form of the KKV conjecture of \cite{KKV,MP}. He actually asked whether we might have a chain of identities (up to signs and leading $1$s) like
$$
\exp\big(\mathsf F_{\mathrm{GW}}^{\mathrm{red}}(X)\big)\,\stackrel?=\,\mathsf Z_P^{\red}(X)\,\stackrel?=\,\mathsf z_P^\chi(X)
\,\stackrel?=\,\mathsf z^{\mathrm{na}}_P(X)
$$
instead of \eqref{list}. This is rather natural, since the first equality looks like the Gromov-Witten/stable pairs conjecture. But the reduced class means this holds only \emph{without} the exponential, which rather fortunately gets cancelled out again by Theorem \ref{it} at the second equality.
\medskip

For completeness we list all of the generating series which are proved to be equal and thus described by the KKV formula \eqref{kkvee}:
\begin{align*}
\mathsf F_{\mathrm{GW}}^{\mathrm{red}}(X)\;&\=\mathsf Z_P^{\red}(X)\=-\log\big(1+\mathsf z_P^\chi(X)\big)\=
-\log\big(1+\mathsf z_P^{\mathrm{na}'}(X)\big)
\\
&\=\mathsf F_{\mathrm{GW}}(T)\=\log\big(1+\mathsf Z_P^{\red}(T)\big)\=\log\big(1+\mathsf z_P^\chi(T)\big) \\
&\=\mathsf Z_P^{\red}(Y/E)\=\mathsf z_P^\chi(Y/E).
\end{align*}
Here $\mathsf z_P^{\mathrm{na}'}(X)$ denotes $\mathsf z_P^{\mathrm{na}}(X)$ with $q$ replaced by $-q$.\medskip

\noindent\textbf{BPS rationality.}
We have shown the weighted Euler characteristic stable pair invariants of $X$ satisfy
\beq{minus}
\log\big(1+\mathsf z_P^\chi(X,q,v)\big)\=-\mathsf F_{\mathrm{GW}}^{\mathrm{red}}(X,u,v).
\eeq
By the KKV formula \eqref{kkvee}, this can be written in BPS form \eqref{BPSform} with (for fixed  $\beta$) the $n_{g,\beta}$ nonzero only for finitely many $g$. As in \cite[Section 3.4]{PT1}, substituting $q=-e^{iu}$, this means that $\mathsf z_P^\chi$ can be written in the BPS form 
$$
-\sum_{g\ge0,\,\beta\ne0}\ \sum_{d\ge1}\,n_{g,\beta\,}\frac{(-1)^{g-1}}r\big((-q)^d-2+(-q)^{-d}\big)^{g-1}v^{d\beta}.
$$
Here the $n_{g,\beta}\in\Z,\ g\ge0$ (the Gopakumar-Vafa invariants of $T$) are the integers determined by the KKV formula \eqref{kkvee}.

Thus the generating series $\mathsf z_P^\chi(X,q,v)$ satisfies the ``BPS rationality" condition of \cite[Section 3.4]{PT1}. Toda \cite[Theorem 6.4]{To2} shows that this BPS rationality for $\mathsf z_P^\chi$ is \emph{equivalent} to a multiple cover formula for the Joyce-Song generalised DT invariants $J_X(r,\beta,n)$ \cite{JS}. These count Gieseker semistable torsion sheaves on $X=S\times\C$ in class $\iota_*(r,\beta,n)$ by weighted Euler characteristic and Joyce's Hall algebra logarithm (and turn out to be independent of the polarization used to define Gieseker semistability). We can therefore conclude that the multiple cover formula holds.

\begin{cor} The multiple cover formula
\beq{mc}
J_X(0,\beta,n)\=\sum_{k|(\beta,n)}\frac1{k^2}J_X(0,\beta/k,1)
\eeq
conjectured in \cite[Conjecture 6.20]{JS}, \cite{To2} holds for $X.\hfill\square$
\end{cor}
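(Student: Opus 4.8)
The plan is to obtain \eqref{mc} as a formal consequence of the BPS rationality of $\mathsf z_P^\chi(X,q,v)$ established in the paragraph above, combined with Toda's equivalence \cite[Theorem 6.4]{To2}. First I would record precisely what has already been proved: by \eqref{minus} and the KKV formula \eqref{kkvee}, the series $\mathsf z_P^\chi(X,q,v)$ admits the BPS expansion of \cite[Section 3.4]{PT1}, with the Gopakumar-Vafa integers $n_{g,\beta}$ nonzero for only finitely many $g$ at each fixed $\beta$. This finiteness and integrality is exactly the ``BPS rationality'' input required by \cite{To2}.

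Next I would invoke the bridge between the two counting problems. The Joyce-Song generalised DT invariants $J_X(0,\beta,n)$ count Gieseker semistable one-dimensional sheaves on $X$ by Behrend-weighted Euler characteristic through Joyce's Hall-algebra logarithm \cite{JS}, and Toda relates their generating series to that of the weighted stable pair invariants $\mathsf z_P^\chi$ by a Joyce-Song wall-crossing identity. The content of \cite[Theorem 6.4]{To2} is that, under this identification, the BPS rationality of $\mathsf z_P^\chi$ holds if and only if the invariants $J_X(0,\beta,n)$ satisfy the multiple cover formula \eqref{mc}. Feeding in the rationality just established then yields \eqref{mc} directly, and the characteristic $1/k^2$ weight emerges from the way a primitive contribution in class $\beta/k$ enters the wall-crossing formula under $\beta\mapsto k(\beta/k)$.

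The genuinely delicate point is not this final deduction, which is purely formal once Toda's theorem is in hand, but the verification that the objects entering \eqref{minus} are literally those appearing in Toda's framework. In particular one must confirm that it is the weighted Euler characteristic invariants $P_\alpha$ of \eqref{kaiinvt}, and not the reduced residue invariants, that feed into the wall-crossing of \cite{To2}, and that the $J_X(0,\beta,n)$ are independent of the polarisation used to define Gieseker semistability on the noncompact threefold $X=S\times\C$ --- otherwise the right-hand side of \eqref{mc} would be ill-posed. Both of these are addressed in \cite{To2}, so once the rationality of $\mathsf z_P^\chi$ is supplied the corollary follows.
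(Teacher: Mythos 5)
Your proposal is correct and follows exactly the paper's own route: the corollary is deduced by combining the BPS rationality of $\mathsf z_P^\chi(X,q,v)$ (established via \eqref{minus} and the KKV formula \eqref{kkvee}) with Toda's equivalence \cite[Theorem 6.4]{To2} between that rationality and the multiple cover formula. Your additional remarks on polarisation independence and on matching the weighted (rather than reduced residue) invariants to Toda's framework are the same caveats the paper delegates to \cite{To2}.
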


Furthermore in \cite[Section 6]{To2}, Toda uses \eqref{mc} to calculate the Joyce-Song invariants $J_X(v)$ for \emph{any} Mukai vector $v\in H^*(S,\Z)$. By combining deformations of $(S,v)$ with Fourier-Mukai equivalences of $D^b(S)$ he can assume $v$ is a curve class $(0,\beta,n)$ to which he can apply wall crossings (to show the invariants $J_X(v)$ are independent of stability condition) and then \eqref{mc} to handle any divisibility of $v$. Since \eqref{mc} is now proved, the result is the following.

\begin{cor} {\bf (Toda \cite{To2})} Let $v\in\big(\!\bigoplus_{p=0}^2H^{p,p}(S)\big)\cap H^*(S,\Z)$ be an algebraic class. For any polarization,
$$
J_X(v)\=\sum_{k|v}\frac1{k^2}\,e\!\left(\Hilb^{\frac12(v/k,v/k)+1}S\right),
$$
where $(\ \cdot\ ,\ \cdot\ )$ denotes the Mukai pairing.
$\hfill\square$
\end{cor}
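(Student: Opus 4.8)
The plan is to follow Toda's strategy in \cite[Section 6]{To2}, for which the only missing input was the multiple cover formula \eqref{mc}, now established in the previous corollary. Since the right-hand side depends on $v$ only through the divisibility of $v$ and the Mukai self-pairings $(v/k,v/k)$, the aim is to reduce the computation of $J_X(v)$, for an arbitrary Mukai vector $v=(r,\beta,n)\in H^*(S,\Z)$, to the case of a curve class $(0,\beta',n')$; to strip off divisibility using \eqref{mc}; and then to evaluate the surviving primitive invariants.

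To reduce $v$ to a curve class I would exploit two compatible symmetries of the Joyce-Song invariants. First, $J_X(v)$ is deformation invariant: as $(S,v)$ varies in a family of projective K3 surfaces on which $v$ remains a $(1,1)$-class, the threefold $X=S\times\C$ deforms and the Behrend-weighted counts are unchanged, so $J_X(v)$ depends only on the orbit of $v$ under Hodge isometries of the Mukai lattice $H^*(S,\Z)$. Second, a Fourier-Mukai equivalence of $D^b(S)$, or a derived equivalence $D^b(S)\cong D^b(S')$ with another K3, acts on $H^*(S,\Z)$ by a lattice isometry and so preserves both $(v,v)$ and the divisibility of $v$. By Eichler's criterion these two numbers determine the orbit, so whenever $(v,v)\ge-2$ one can choose such an equivalence $\Phi$ taking $v$ to a curve class $(0,\beta',n')$. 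The catch is that $\Phi$ only sends Gieseker-semistable sheaves of class $v$ to Bridgeland-semistable \emph{objects} of the transformed class; to recover an honest sheaf count one must show that $J_X$ is unchanged on crossing the walls of the Bridgeland stability manifold separating $\Phi$-transformed Gieseker stability from ordinary stability for the curve class. This wall-crossing is the step I expect to be the main obstacle, as it needs the full wall-crossing and no-pole machinery of \cite{JS} together with an explicit analysis of the relevant walls --- the technical core of Toda's argument.

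With $v$ replaced by a curve class, \eqref{mc} reduces $J_X(0,\beta',n')$ to the primitive invariants $J_X(0,\beta'',1)$. For $\beta''$ primitive with $(\beta'')^2=2h-2$, the moduli space of Gieseker-stable one-dimensional sheaves on $S$ of holomorphic Euler characteristic $1$ in class $\beta''$ is smooth, projective and holomorphic-symplectic, and by the theorems of Mukai, O'Grady and Yoshioka it is deformation equivalent to $\Hilb^hS$. A dimensional reduction on the Calabi-Yau threefold $X=S\times\C$ then identifies its Behrend-weighted count with $e\big(\Hilb^hS\big)=e\big(\Hilb^{\frac12(\beta'')^2+1}S\big)$, the sign being $+1$ because the dimension $2h$ is even. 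Since for a curve class the Mukai self-pairing $(v/k,v/k)$ equals the self-intersection $(\beta/k)^2$, feeding $J_X(0,\beta'',1)=e\big(\Hilb^{\frac12(\beta'')^2+1}S\big)$ back through \eqref{mc} yields the stated formula for curve classes, and the deformation plus Fourier-Mukai reduction of the previous paragraph then extends it to every $v$.
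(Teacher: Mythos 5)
Your proposal is correct and takes essentially the same approach as the paper: the corollary is precisely Toda's theorem from \cite[Section 6]{To2}, whose only missing input was the multiple cover formula \eqref{mc}, now supplied by the preceding corollary. Your sketch of Toda's reduction --- deformations of $(S,v)$ combined with Fourier-Mukai equivalences to reach a curve class, wall-crossing to show independence of stability condition, then \eqref{mc} plus the primitive-class evaluation via $\Hilb^hS$ --- matches the paper's own (much briefer) description of his argument.
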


\section{Vafa-Witten}
One can repeat the arguments of Sections \ref{Kai} and \ref{Georg} for Joyce-Song pairs instead of stable pairs. This has consequences for the Vafa-Witten invariants of \cite{TT2}, as we sketch now.

We begin on an arbitrary polarized surface $(S,\cO_S(1))$ 
and set $X$ to be the total space of the canonical bundle $K_S$ with its projection $p\colon X\to S$. (Later $S$ will revert to being a K3 surface.) For simplicity we assume as in \cite[Equation 2.4]{TT2} that the polarization on $S$ is \emph{generic} so that semistable sheaves are only destabilised by sheaves whose charges are proportional.

We consider compactly supported Gieseker semistable torsion sheaves $\cE$ on $X$ such that the charge of $p_*\;\cE$ is
$$
\alpha\=(r,c_1,c_2)\ \in\ \Z\oplus H^{1,1}(S,\Z)\oplus\Z.
$$
The sheaves $\cE$ can be described equivalently in terms of Higgs pairs $(E,\phi)$ on $S$ with $E:=p_*\;\cE$ and $\phi\in\Hom(E,E\otimes K_S)$; see \cite[Section 2]{TT1} for instance.

We then take Joyce-Song stable pairs $\cO_X\to\cE(n)$ for $n\gg0$, where $\cO_X(n):=p^*\cO_S(n)$. They form a fine moduli space $\cP_\alpha(X)$; see \cite[Section 3.1]{TT2} for a review. We let $$\cP^\perp_\alpha(X)\ \subset\ \cP_\alpha(X)$$ denote the moduli space of pairs whose underlying torsion sheaf $\cE$ has centre of mass zero on each $K_S$ fibre (equivalently $\tr\phi=0$ in the Higgs description) and $\det p_*\;\cE\cong\cO_S$.

Both moduli spaces carry $\C^*$-actions induced from the standard scaling action on the fibres of $K_S$. The two fixed loci are the same.

There is a reduced obstruction theory on $\cP_\alpha(X)$ which 
contains a summand $H^{\ge1}(K_S)$ governing the deformation-obstruction theory of the centre of mass of the sheaves (or trace of the Higgs field). This may be removed to define a \emph{symmetric} obstruction theory on $\cP^\perp_\alpha(X)$ \cite{TT2}. Therefore, on restriction to their common fixed locus, the former obstruction theory is the direct sum of the latter with $H^{\ge1}(K_S)\otimes\t$. Hence the two virtual normal bundles also differ by $H^{\ge1}(K_S)\otimes\t$.
Localising to their common $\C^*$ fixed locus then, we get the relation
\beq{resid}
P^\perp_\alpha(n)\ :=\ 
\int_{\big[(\cP^\perp_\alpha(X)^{\C^*}\big]^{\vir\ }}\frac1{e(N^{\vir})}\=\int_{\big[(\cP_\alpha(X)^{\C^*}\big]^{\mathrm{red}}}\,\frac{t^{h^0(K_S)-h^1(K_S)}}{e(N^{\mathrm{vir}})}
\eeq
for the reduced residue invariants $P^\perp_\alpha(n)$ of \cite[Section 7]{TT2}.

One can also consider invariants defined by weighted Euler characteristic. When $H^1(\cO_S)=0$  \cite[Section 4]{TT2},
\begin{multline} \label{wech}
P_\alpha(n)\,:=\,
e\big(\cP^\perp_\alpha(X),\chi^B\big)\,=\,
(-1)^{h^0(K_S)}e\big(\cP_\alpha(X),\chi^B\big) \\
=:\, (-1)^{h^0(K_S)}\widetilde P_\alpha(n).
\end{multline}
Again the sign is due to the extra deformations of $\tr\phi$.

In \cite{TT2} both sets of these pair invariants are studied in connection with \emph{Vafa-Witten invariants} of polarized surfaces $(S,\cO_S(1))$. These count semistable sheaves $\cE$ on $X=K_S$ with centre of mass zero on each $K_S$ fibre of $X\to S$ and $\det p_*\;\cE\cong\cO_S$.

The situation is most straightforward for the invariant $\vw_\alpha(S)$ defined in \cite[Section 4]{TT2} by Behrend localisation and 
Joyce's Hall algebra machinery. It is shown in \cite[Equation 4.2]{TT2} that the weighted Euler characteristics $P_\alpha$ of \eqref{wech} are related to the $\vw_\alpha$ by the following formula when $H^{0,1}(S)=0$.\footnote{When $H^{0,1}(S)\ne0$ there is a simpler formula \cite[Proposition 4.3]{TT2}, but it is not relevant for K3 surfaces.}
(Alternatively, we can take it to define the $\vw_\alpha$.) 
\beq{munch}
P_\alpha(n)\ =\ \mathop{\sum_{\ell\ge 1,\,(\alpha_i=\delta_i\alpha)_{i=1}^\ell:}}_{\sum_{i=1}^\ell\delta_i=1}
\frac{(-1)^\ell}{\ell!}\prod_{i=1}^\ell(-1)^{\chi(\alpha_i(n))} \chi(\alpha_i(n))\;\vw_{\alpha_i}(S).
\eeq

There is a rival invariant $\VW_\alpha(S)$ \cite{TT1} defined by virtual localisation instead of Behrend localisation when semistable sheaves of class $\alpha$ are all stable. The definition can be extended to the semistable case by its conjectural relationship to the virtual localisation pair invariants $P_\alpha^\perp$ of \eqref{resid}.

\begin{conj} \cite[Conjecture 7.2]{TT2} \label{conj}
If $H^{0,1}(S)=0=H^{0,2}(S)$ there exist $\VW_{\alpha_i}(S)\in\Q$ such that
$$
P^\perp_{\alpha}(n)\ =\ \mathop{\sum_{\ell\ge 1,\,(\alpha_i=\delta_i\alpha)_{i=1}^\ell:}}_{\sum_{i=1}^\ell\delta_i=1}
\frac{(-1)^\ell}{\ell!}\prod_{i=1}^\ell(-1)^{\chi(\alpha_i(n))} \chi(\alpha_i(n))\;\VW_{\alpha_i}(S)
$$
for $n\gg0$.
When either of $H^{0,1}(S)$ or $H^{0,2}(S)$ is nonzero we take only the first term in the sum:
\beq{shorter}
P^\perp_{r,L,c_2}(n)\ =\ (-1)^{\chi(\alpha(n))-1}\chi(\alpha(n))\VW_{r,L,c_2}(S).
\eeq
Furthermore we expect $\VW_\alpha=\vw_\alpha$ whenever $\deg K_S\le0$.
\end{conj}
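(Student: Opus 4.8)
The plan is to rerun Sections~\ref{Kai} and~\ref{Georg} with Joyce--Song pairs $\cO_X\to\cE(n)$ in place of stable pairs. Since a K3 surface has $K_S\cong\cO_S$, the three-fold $X=K_S$ is again $S\times\C$ and we may use $Y=S\times E$ exactly as before. The output of the two sections will be a Joyce--Song analogue of Theorem~\ref{DM}: writing $\mathsf Z^\perp(X)$ for the generating series of the reduced residue invariants $P^\perp_\alpha(n)$ of~\eqref{resid} and $\widetilde{\mathsf z}^{\,\chi}(X)$ for that of the weighted Euler characteristic invariants $\widetilde P_\alpha(n)$ of~\eqref{wech}, I expect the logarithmic identity
\[
\mathsf Z^\perp(X)\=-\log\big(1+\widetilde{\mathsf z}^{\,\chi}(X)\big).
\]
I then feed this into Joyce--Song's universal identity~\eqref{munch}, which (via $P_\alpha(n)=(-1)^{h^0(K_S)}\widetilde P_\alpha(n)$) expresses the weighted invariants as an \emph{exponential} in the $\vw_\alpha(S)$. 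The $-\log$ cancels this exponential, collapsing the whole Joyce--Song sum to its single linear term. That linear term is precisely the short formula~\eqref{shorter}, so comparing coefficients defines $\VW_\alpha(S)$, verifies Conjecture~\ref{conj}, and yields $\VW_\alpha(S)=\vw_\alpha(S)$. The truncation to one term is exactly what one should find for K3, where $H^{0,2}(S)=\C\ne0$.

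First I would carry out the Behrend computation of Section~\ref{Kai}. On $Y=S\times E$ the moduli of Joyce--Song pairs again carries the translation $E$-action with finite stabilisers, and I stratify it by the number $k$ of K3 fibres on which $\cE$ is supported, as in~\eqref{mod}. Transplanting pairs from $S\times\{0\}\subset X$ to $S\times\{p\}\subset Y$ by the exponential map, the Behrend function still cannot distinguish $\C$ from $E$, and the configuration-space identity $e\big((E^k\take\Delta_k)/E\big)=(-1)^{k-1}(k-1)!$ is unchanged. This produces the analogue of Proposition~\ref{jpo}, namely the $-\log$ relation on the weighted side.

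Next I would run the Oberdieck argument of Section~\ref{Georg}. Because the sheaves are compactly supported, $\cP_\alpha(Y)/E$ is proper, so the Joyce--Song analogue of Oberdieck's identity~\eqref{BP} rewrites its Kai-weighted invariant as an integral of a suitable insertion over the reduced virtual cycle on $\cP_\alpha(Y)$. Degenerating $E$ to a $1$-nodal rational curve and applying the degeneration formula, the double-cosection argument---each component of support carries a cosection built from the holomorphic symplectic form of $S$, and the reduced theory removes only one---kills every stratum in which the charge $\alpha$ splits, exactly as in the proof of Theorem~\ref{DM}. Only the single-fibre stratum survives, and $\C^*$-localisation on $\PP^1$ turns the insertion into $t^{h^0(K_S)-h^1(K_S)}=t$, recovering the reduced residue invariant $P^\perp_\alpha(n)$ of~\eqref{resid}. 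Assembling the two steps gives the logarithmic identity above.

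The hard part will be the geometric input of this second step: constructing the Oberdieck-type symmetric reduced obstruction theory on $\cP_\alpha(Y)/E$ for \emph{semistable} sheaves, which may carry automorphisms, and proving the insertion formula generalising~\eqref{BP}. This has to be done while tracking the centre-of-mass reduction relating $\cP_\alpha(X)$ and $\cP^\perp_\alpha(X)$, whose obstruction theories differ by $H^{\ge1}(K_S)\otimes\t$, so that the signs $(-1)^{h^0(K_S)}$ of~\eqref{wech} and the universal factor $(-1)^{\chi(\alpha(n))-1}\chi(\alpha(n))$ of~\eqref{shorter} all fall into place. Once the logarithm from the configuration-space calculation is matched term by term against the logarithm of~\eqref{munch}, the cancellation leaves~\eqref{shorter} with $\VW_\alpha(S)=\vw_\alpha(S)$, proving Conjecture~\ref{conj} for K3 surfaces.
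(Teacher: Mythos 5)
Your proposal follows the paper's own proof essentially step for step: rerun the Behrend and Oberdieck arguments (Sections \ref{Kai} and \ref{Georg}) for Joyce--Song pairs to obtain $\sum P^\perp_\alpha(n)q^\alpha=-\log\big(1+\sum\widetilde P_\alpha(n)q^\alpha\big)$ (the paper's Proposition \ref{VW}), then expand the exponential and compare with the universal identity \eqref{munch} to force \eqref{shorter} with $\VW_\alpha(S)=\vw_\alpha(S)$. The two points you flag as "the hard part" are exactly the ones the paper dispatches with short observations: Joyce--Song pairs carry no automorphisms (even though their underlying semistable sheaves may), so the patching and Oberdieck's obstruction theory go through unchanged, and the gluing only produces \emph{semistable} sheaves when the fibre charges are proportional (same reduced Hilbert polynomial), which is why all sums run over multiples of a fixed primitive class $\alpha_0$.
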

These $\VW_\alpha$ define the Vafa-Witten invariants of $S$ when the conjecture holds. For instance by \cite[Proposition 6.8]{TT2} it holds when all semistable sheaves of charge $\alpha$ are stable, and in this case the $\VW_\alpha$ above reduce to the more direct definition of \cite{TT1}.

The conjecture also holds when $\deg K_S<0$ \cite[Theorem 6.14]{TT2}, and we prove it holds for $S$ a K3 surface in Theorem \ref{conjholds} below. In both of these cases $\VW_\alpha=\vw_\alpha$, but on general type surfaces the two invariants differ. There the virtual localisation invariant $\VW_\alpha$ seems to give the answers predicted by physics, while the weighted Euler characteristic invariant $\vw_\alpha$ gives the ``wrong" theory.
\medskip

From now on we fix $S$ to be a K3 surface.
Since $H^{0,2}(S)=\C$ we use the simplified formula \eqref{shorter} for the residue invariants $P^\perp_\alpha$, while continuing to use the full formula \eqref{munch} for weighted Euler characteristic invariants $P_\alpha$. We will see that \eqref{munch} is basically the exponential of \eqref{shorter}.

Like stable pairs, Joyce-Song pairs have no automorphisms (not even $\C^*$, unlike moduli of sheaves). Just as in Section \ref{Kai} they can therefore be patched canonically from $X=S\times\C$ into $Y=S\times E$ when their charge $\alpha$ is pushed forward from a K3 fibre $S$. The only difference is that to get \emph{stable} Joyce-Song pairs we patch only pairs whose underlying sheaves have the \emph{same reduced Hilbert polynomial}.  By the genericity of $\cO_S(1)$ this means we only patch pairs whose charges $\alpha$ are proportional. Thus we get the following analogue of Proposition \ref{jpo}
\beq{oof}
\sum e\big(\cP_\alpha(Y)/E,\chi^B\big)\;q^\alpha
\=-\log\Big(1+\sum e\big(\cP_\alpha(X),\chi^B\big)q^\alpha\Big),
\eeq
where both sums are over all $\alpha\ne0$ which are multiples of a fixed primitive class $\alpha_0$.

Section \ref{Georg} also goes through as before. As in \cite{Ob} the reduced obstruction theory on $\cP_\alpha(Y)$ gives a reduced cycle over which we can integrate an insertion to recover the left hand side of \eqref{oof}. The degeneration formula \cite{LW} again applies as we let $E$ acquire a nodal singularity. The same $\C^*$-localization argument from the proof of Theorem \ref{DM} thus gives
\beq{oof2}
\sum e\big(\cP_\alpha(Y,\iota_*\alpha)/E,\chi^B\big)\;q^\alpha
\=\sum P^\perp_\alpha(n)q^\alpha.
\eeq
Again both sums are over all $\alpha\ne0$ which are multiples of a fixed primitive class $\alpha_0$, and we have used \eqref{resid} to equate $P^\perp_\alpha(n)$ with the reduced residue invariants of $X$.
Combining \eqref{oof} with \eqref{oof2} gives the following.


\begin{prop}\label{VW} The reduced localised invariants \eqref{resid} are related to the weighted Euler characteristic invariants \eqref{wech} by 
$$
\sum P^\perp_\alpha(n)q^\alpha\=-\log\left(1+\sum\widetilde P_\alpha(n)q^\alpha\right),
$$
where both sums are over all $\alpha\ne0$ which are multiples of a fixed primitive class $\alpha_0. \hfill\square$
\end{prop}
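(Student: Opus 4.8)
The plan is to transcribe the two-step strategy of Sections~\ref{Kai} and~\ref{Georg} into the Joyce-Song setting, the single genuinely new feature being that Gieseker semistability of Joyce-Song pairs forces all charges occurring in a stratification to be proportional, which is why both sums run only over multiples of the primitive class $\alpha_0$.

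First I would prove the Joyce-Song analogue of Proposition~\ref{jpo}. Exactly as in Section~\ref{Kai}, the scaling $\C^*$-action on $X=S\times\C$ acts freely away from the locus $\cP^0_\alpha\subset\cP_\alpha(X)$ of pairs set-theoretically supported on $S\times\{0\}$, so Behrend-weighted localisation gives $\widetilde P_\alpha(n)=e\big(\cP^0_\alpha,\chi^B\big)$. Transplanting single-fibre Joyce-Song pairs from $S\times\{0\}\subset X$ to $S\times\{p\}\subset Y$ through the fixed trivialisation of the tangent bundle of $E$, I would then stratify $\cP_\alpha(Y)$ by the minimal number $k$ of K3 fibres supporting the pair. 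The one difference from Section~\ref{Kai} is that for the glued pair to be a \emph{stable} Joyce-Song pair its constituent single-fibre pieces must share the same reduced Hilbert polynomial; by genericity of $\cO_S(1)$ this forces their charges to be proportional, so the distinct charges $\alpha_1,\ldots,\alpha_\ell$ range only over multiples of $\alpha_0$. Each stratum is the quotient of $\prod_i(\cP^0_{\alpha_i})^{k_i}\times(E^k\take\Delta_k)$ by $\prod_i\Sigma_{k_i}$, and the identical configuration-space computation $e\big((E^k\take\Delta_k)/E\big)=(-1)^{k-1}(k-1)!$ yields
$$
\sum_\alpha e\big(\cP_\alpha(Y)/E,\chi^B\big)\,q^\alpha\=-\log\Big(1+\sum_\alpha\widetilde P_\alpha(n)\,q^\alpha\Big),
$$
both sums over multiples of $\alpha_0$.

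Second I would run the Oberdieck-degeneration argument of Section~\ref{Georg}. The local isomorphisms identifying Joyce-Song pairs on $S\times\C$ with those on $S\times E$ equip $\cP_\alpha(Y)/E$ with a symmetric obstruction theory for which, as in \cite{Ob}, the Behrend-weighted invariant equals the integral of the corresponding insertion against the \emph{reduced} virtual class on $\cP_\alpha(Y)$ (the analogue of \eqref{BP}). Degenerating $E$ to a $1$-nodal rational curve and applying the Li-Wu formula \cite{LW}, the double-cosection vanishing kills every term in which charge sits on two components, while the insertion kills the bubble contributions, so the whole integral localises to $S_0$. Virtual $\C^*$-localisation then reduces it to $\int_{[\cP_\alpha(X)^{\C^*}]^{\mathrm{red}}}t^{h^0(K_S)-h^1(K_S)}/e(N^{\vir})$; since $h^0(K_S)-h^1(K_S)=1$ for a K3 surface this is a single power of $t$, and taking its residue at $t=0$ identifies the outcome with $P^\perp_\alpha(n)$ via \eqref{resid}. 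Combining the two steps gives the stated identity.

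I expect the main obstacle to lie entirely in the second step: the stratification and Euler-characteristic bookkeeping of the first step is a mechanical repetition of Proposition~\ref{jpo} once the proportionality of charges is recorded, but one must genuinely check that Oberdieck's symmetric obstruction theory and his ``reduced-class-with-insertion equals $E$-divided'' comparison transfer through the local isomorphisms $\cP^\perp_\alpha(X)\cong\cP_\alpha(Y)/E$ to the Joyce-Song moduli, rather than merely asserting them. In particular one must verify that the trace-removal splitting off the summand $H^{\ge1}(K_S)\otimes\t$ of the obstruction theory is compatible with Oberdieck's reduction, so that the insertion produced by the degeneration is precisely the power $t^{h^0(K_S)-h^1(K_S)}$ appearing in \eqref{resid}.
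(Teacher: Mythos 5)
Your proposal is correct and is essentially the paper's own argument: the paper proves Proposition \ref{VW} precisely by re-running Sections \ref{Kai} and \ref{Georg} for Joyce--Song pairs, noting (as you do) that semistability together with genericity of $\cO_S(1)$ restricts all charges in the stratification to multiples of the primitive class $\alpha_0$, that the local isomorphisms between single-fibre pairs on $S\times\C$ and $S\times E$ identify the symmetric obstruction theory on $\cP^\perp_\alpha(X)$ with Oberdieck's on $\cP_\alpha(Y)/E$, and that the degeneration and $\C^*$-localisation argument then produces exactly the insertion $t^{h^0(K_S)-h^1(K_S)}=t$ of \eqref{resid}. The compatibility checks you flag at the end are indeed the only substantive verifications, and the paper asserts them at the same level of detail (its treatment is explicitly a sketch), so nothing in your outline diverges from the published route.
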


Using the sign in \eqref{wech}, and setting $\mathbb N=\Z_{>0}$ without zero, this gives
\beqa
1-\sum_{\alpha\in\mathbb N\cdot\alpha_0}P_\alpha(n)q^\alpha &=& 
\exp\left(-\sum_{\alpha\in\mathbb N\cdot\alpha_0}P^\perp_\alpha(n)q^\alpha\right) \\
&=& 1+\sum_{\ell\ge1}\frac1{\ell!}\ \sum_{\alpha_1,\cdots,\alpha_\ell\in\mathbb N\cdot\alpha_0}\ \ \prod_{i=1}^\ell\left(-P^\perp_{\alpha_i}(n)q^{\alpha_i}\right),
\eeqa
by expanding the exponential. We conclude that
\beq{exp}
-P_\alpha(n)\=\mathop{\sum_{\ell\ge 1,\,(\alpha_i=\delta_i\alpha)_{i=1}^\ell:}}_{\sum_{i=1}^\ell\delta_i=1}\ \frac{(-1)^\ell}{\ell!}\prod_{i=1}^\ell P^\perp_{\alpha_i}(n).
\eeq

\begin{thm} \label{conjholds} Conjecture \ref{conj} holds when $S$ is a K3 surface, with
$$
\VW_\alpha(S)\=\vw_\alpha(S).
$$
\end{thm}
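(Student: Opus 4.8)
The plan is to read off the K3 case \eqref{shorter} of Conjecture \ref{conj}, with $\VW_\alpha$ equal to $\vw_\alpha$, by playing Proposition \ref{VW} against the Joyce--Song identity \eqref{munch}. Both are identities along the ray $\mathbb N\cdot\alpha_0$ for the generating series of weighted Euler characteristic Joyce--Song pair invariants, and each presents that series as the exponential of a single \emph{linear} series; the theorem is precisely the assertion that the two linear exponents agree.

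First I would put both identities into exponential generating-series form. Writing $\widetilde P_\alpha(n)=e\big(\cP_\alpha(X),\chi^B\big)$ and using the sign $(-1)^{h^0(K_S)}=-1$ of \eqref{wech} (for a K3 surface $h^0(K_S)=h^0(\cO_S)=1$), Proposition \ref{VW} becomes
\[
1+\sum_{\alpha\in\mathbb N\cdot\alpha_0}\widetilde P_\alpha(n)\,q^\alpha\=\exp\Big(-\sum_{\alpha\in\mathbb N\cdot\alpha_0}P^\perp_\alpha(n)\,q^\alpha\Big).
\]
On the other hand \eqref{munch} --- which the discussion above already advertises as ``basically the exponential of \eqref{shorter}'' --- is the matching statement
\[
1+\sum_{\alpha\in\mathbb N\cdot\alpha_0}\widetilde P_\alpha(n)\,q^\alpha\=\exp\Big(\sum_{\alpha\in\mathbb N\cdot\alpha_0}(-1)^{\chi(\alpha(n))}\chi(\alpha(n))\,\vw_\alpha\,q^\alpha\Big),
\]
where one uses that $\chi(\alpha(n))$ is additive in $\alpha$, so that the parities $(-1)^{\chi(\alpha_i(n))}$ multiply to $(-1)^{\chi(\alpha(n))}$ across any decomposition $\sum_i\alpha_i=\alpha$.

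Next, since both right-hand sides are exponentials of power series with vanishing constant term while the two left-hand sides are literally equal, I would take logarithms and equate the linear exponents coefficient by coefficient. This yields
\[
P^\perp_\alpha(n)\=(-1)^{\chi(\alpha(n))-1}\chi(\alpha(n))\,\vw_\alpha
\]
for every $\alpha\in\mathbb N\cdot\alpha_0$ and every $n\gg0$. This is exactly the shorter formula \eqref{shorter}, which is the case of Conjecture \ref{conj} relevant to a K3 surface since $H^{0,2}(S)\ne0$; and because the Joyce--Song invariant $\vw_\alpha$ does not depend on $n$, it supplies the $n$-independent rational numbers $\VW_\alpha=\vw_\alpha$ demanded by the conjecture. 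This proves both assertions of the theorem simultaneously.

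The main obstacle is the sign bookkeeping: the two linear exponents must emerge \emph{equal}, not opposite (the latter would give $\VW_\alpha=-\vw_\alpha$), and with no higher-order corrections surviving. This turns on the factor $(-1)^{h^0(K_S)}=-1$ produced by the trace (centre of mass) deformations recorded in \eqref{wech}: it must be inserted so that \eqref{munch} and Proposition \ref{VW} both become exponential identities for the \emph{one} series $1+\sum_\alpha\widetilde P_\alpha(n)\,q^\alpha$ before the logarithms are compared. Comparing naively in the variables $P_\alpha$ is delicate, because $\log\big(1+\sum_\alpha P_\alpha q^\alpha\big)$ and $\log\big(1-\sum_\alpha P_\alpha q^\alpha\big)$ already diverge at order $q^{2\alpha_0}$; placing the $(-1)^{h^0(K_S)}$ sign correctly is exactly what reconciles them. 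The remaining steps --- legitimacy of the formal logarithm restricted to the ray $\mathbb N\cdot\alpha_0$, and the fact that by genericity of the polarization only mutually proportional charges interact --- are routine.
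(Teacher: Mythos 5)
Your proposal runs along exactly the same lines as the paper's own proof: Proposition \ref{VW} (together with the sign in \eqref{wech}) is played against the Joyce--Song identity \eqref{munch}, and the K3 case \eqref{shorter} of Conjecture \ref{conj}, with $\VW_\alpha=\vw_\alpha$, is read off by matching the two linear exponents. The paper does this by expanding the exponential into \eqref{exp} and then ``comparing \eqref{exp} to \eqref{munch}''; you do it by taking logarithms of the two generating series. These are the same argument, and your observations that $\vw_\alpha$ is $n$-independent (so it can serve as the $\VW_\alpha$ demanded by the conjecture) and that only proportional charges interact are correct and match the paper.

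One caveat, which applies to your write-up and to the paper's one-line comparison equally. Your pivotal rewriting of \eqref{munch} as
$$
1+\sum_{\alpha}\widetilde P_\alpha(n)\,q^\alpha\=\exp\Big(\textstyle\sum_\alpha(-1)^{\chi(\alpha(n))}\chi(\alpha(n))\,\vw_\alpha\,q^\alpha\Big)
$$
is not a formal consequence of \eqref{munch} as displayed. Writing $C_\alpha=(-1)^{\chi(\alpha(n))}\chi(\alpha(n))\vw_\alpha$, the displayed \eqref{munch} says literally $1+\sum_\alpha P_\alpha q^\alpha=\exp\big(-\sum_\alpha C_\alpha q^\alpha\big)$, and inserting $P_\alpha=-\widetilde P_\alpha$ turns this into $1-\sum_\alpha\widetilde P_\alpha q^\alpha=\exp\big(-\sum_\alpha C_\alpha q^\alpha\big)$, which is \emph{not} equivalent to your identity: additivity of $\chi(\alpha(n))$ disposes of the factors $(-1)^{\chi(\alpha_i(n))}$ but cannot convert the coefficient $(-1)^\ell/\ell!$ into $1/\ell!$, and the two identities diverge at order $q^{2\alpha_0}$. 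Indeed, combining the displayed \eqref{munch} with \eqref{exp} verbatim yields $P^\perp_{2\alpha_0}=-C_{2\alpha_0}+C_{\alpha_0}^2$, whose second term would make the putative $\VW_{2\alpha_0}$ depend on $n$ --- so the comparison only closes if the Joyce--Song formula is quoted in precisely the exponential form you use (equivalently with coefficient $1/\ell!$ in place of $(-1)^\ell/\ell!$ after the overall sign is pulled out). The paper's ``comparing \eqref{exp} to \eqref{munch}'' makes exactly the same silent adjustment, so your proof is faithful to the paper's; but a referee of either would ask for the sign conventions in \eqref{munch} (i.e.\ in the quoted \cite[Equation 4.2]{TT2}) to be pinned down so that the even-$\ell$ terms genuinely cancel rather than being waved away.
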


\begin{proof}
Comparing \eqref{exp} to \eqref{munch} shows that
$$
P^\perp_\alpha(n)\=-(-1)^{\chi(\alpha(n))}\chi(\alpha(n))\;\vw_\alpha(S).
$$
Therefore \eqref{shorter} is satisfied with $\VW_\alpha(S)=\vw_\alpha(S)$.
\end{proof}

\bibliographystyle{halphanum}
\bibliography{references}

\begin{thebibliography}{BBBJ}

\bibitem[Be]{Be}
K.~Behrend,
\newblock {\em {D}onaldson-{T}homas invariants via microlocal geometry,}
Ann. of Math. \textbf{170}, 1307--1338, 2009. \mathAG{0507523}.

\bibitem[BBBJ]{BBBJ} O. Ben-Bassat, C. Brav, V. Bussi and D. Joyce, \emph{A `Darboux Theorem' for shifted symplectic structures on derived Artin stacks, with applications}, Geom. \& Top. \textbf{19} (2015), 1287--1359. \arXiv{1312.0090}.

\bibitem[Br]{Br} T. Bridgeland, \emph{Hall algebras and curve-counting invariants}, Jour. AMS. \textbf{24} (2011), 969--998. \arXiv{1002.4374}.

\bibitem[DM]{BenSven}
B. Davison and S. Meinhardt, \emph{Motivic Donaldson–Thomas invariants for the one-loop quiver with potential}, Geom. \& Top. \textbf{19} (2015), 2535--2555. \arXiv{1108.5956}.

\bibitem[GP]{GP} T.~Graber and R.~Pandharipande, \textit{Localization of virtual classes}, Invent.~Math.~{\bf{135}} (1999), 487--518. \href{http://arxiv.org/abs/alg-geom/9708001}{alg-geom/9708001}.

\bibitem[Jo]{Jo} D. Joyce, \emph{A classical model for derived critical loci}, Jour. Diff. Geom. \textbf{101} (2015), 289--367. \arXiv{1304.4508}.

\bibitem[JS]{JS} D.~Joyce and Y.~Song, \textit{A theory of generalized {D}onaldson-{T}homas invariants}, Memoirs of the AMS \textbf{217}, no. 1020 (2012). \href{http://arxiv.org/abs/0810.5645}{arXiv:0810.5645}.

\bibitem[KKV]{KKV}
S.~Katz, A.~Klemm, and C.~Vafa, {\em M-theory, topological strings and spinning black holes}, 
Adv. Theor. Math. Phys., {\bf 3} (1999), 1445--1537. \hepth{9910181}.

\bibitem[Li]{Li}
J.~Li, \emph{A degeneration formula of Gromov-Witten invariants}, J. Diff. Geom. {\bf 60} (2002), 199--293. \mathAG{0110113}.

\bibitem[LW]{LW} J.~Li and B.~Wu, \textit{Good degeneration of Quot-schemes and coherent systems}, Comm. Anal. Geom. \textbf{23} (2015), 841--921. \arXiv{1110.0390}.

\bibitem[Ma]{Ma} D. Maulik, \emph{Motivic residues and Donaldson-Thomas theory}, in preparation.

\bibitem[MP]{MP}
D.~Maulik and R.~Pandharipande,
{\em Gromov-Witten theory and Noether-Lefschetz theory}, in {\em A 
celebration of algebraic geometry}, Clay Mathematics Proceedings {\bf 18} (2010),
469--507, AMS.
\arXiv{0705.1653}.

\bibitem[Ob]{Ob} G.~Oberdieck, \textit{On reduced stable pair invariants}, Math. Zeitschrift \textbf{289} (2018), 323--353. \arXiv{1605.04631}.

\bibitem[OS]{OS} G.~Oberdieck and J.~Shen, \textit{Reduced {D}onaldson-{T}homas invariants and the ring of dual numbers}, Proc. LMS \textbf{118} (2019), 191--220. \arXiv{1612.03102}.

\bibitem[PT1]{PT1}
R.~Pandharipande and R.~P. Thomas,
\newblock {\em Curve counting via stable pairs in the derived category},
Invent. Math. \textbf{178} (2009), 407--447. \arXiv{0707.2348}.

\bibitem[PT2]{PT2} R.~Pandharipande and R. P. Thomas, 
\emph{Stable pairs and BPS invariants}, Jour. AMS. \textbf{23} (2010), 267--297. \arXiv{0711.3899}.

\bibitem[PT3]{PT3} R.~Pandharipande and R. P. Thomas, \emph{The Katz-Klemm-Vafa conjecture for K3 surfaces}, Forum of Math. Pi \textbf{4} (2016), 1--111. \arXiv{1404.6698}.

\bibitem[TT1]{TT1} Y.~Tanaka and R. P. Thomas, \emph{Vafa-Witten invariants for projective surfaces I: stable case}, to appear in Jour. Alg. Geom. \arXiv{1702.08486}.

\bibitem[TT2]{TT2} Y.~Tanaka and R. P. Thomas, \emph{Vafa-Witten invariants for projective surfaces II: semistable case}, Pure Appl. Math. Quart. \textbf{13} (2017), 517--562. Volume in honour of the 60th birthday of Simon Donaldson. \arXiv{1702.08487}.

\bibitem[To1]{To1} Y. Toda, {\em Stable pairs on local $K3$ surfaces}, J. Diff.
Geom. {\bf{92}} (2012), 285--370. \arXiv{1103.4230}.

\bibitem[To2]{To2} Y. Toda, \emph{Stability conditions and curve counting invariants on Calabi-Yau
3-folds}, Kyoto Jour. Math. \textbf{52} (2012), 1--50. \arXiv{1103.4229}.

\bibitem[To3]{To3} Y. Toda, \emph{On categorical Donaldson-Thomas theory for local surfaces}, \arXiv{1907.09076}.

\bibitem[TV]{TV} B.~To\"en and M.~Vaquie, \emph{Moduli of objects in dg-categories}, Annales de l'\'E.N.S. \textbf{40} (2007), 387--444. \mathAG{0503269}.

\end{thebibliography}

%
%

\end{document}